\begin{document}

\title{Characterization of manifolds of constant curvature by spherical curves\thanks{This is a pre-print of an article published in Annali di Matematica Pura ed Applicata. The final authenticated version is available online at https:$//$doi.org$/$10.1007$/$s10231-019-00874-5} }

\author{Luiz C. B. da Silva         \and
        Jos\'e D. da Silva 
}


\institute{Da Silva, L. C. B. \at Department of Physics of Complex Systems,\\ 
Weizmann Institute of Science\\
             Rehovot 7610001, Israel\\
         \email{luiz.da-silva@weizmann.ac.il}
           \and 
           Da Silva, J. D. \at
             Departamento de Matem\'atica,\\
             Universidade Federal Rural de Pernambuco, \\
         52171-900, Recife, Pernambuco, Brazil\\
         \email{jose.dsilva@ufrpe.br}
}
\date{Received: 29 Oct 2018 / Accepted: 03 Jun 2019}

\maketitle

\begin{abstract}
It is known that the so-called rotation minimizing (RM) frames allow for a simple and elegant characterization of geodesic spherical curves in Euclidean, hyperbolic, and spherical spaces through a certain linear equation involving the coefficients that dictate the RM frame motion [L.C.B. da Silva and J.D. da Silva, Mediterr. J. Math. \textbf{15}, 70 (2018)]. Here, we shall prove the {converse}, i.e., we show that if all geodesic spherical curves on a Riemannian manifold are characterized {by} a certain linear equation, then all the geodesic spheres with a sufficiently small radius are totally umbilical and, consequently, the given manifold has constant sectional curvature. We also furnish two other characterizations in terms of (i) an inequality involving the mean curvature of a geodesic sphere and the curvature function of their curves and (ii) the vanishing of the total torsion of closed spherical curves in the case of three-dimensional manifolds. Finally, we also show that the same results are valid for semi-Riemannian manifolds of constant sectional curvature.
\keywords{Rotation minimizing frame \and totally umbilical submanifold \and geodesic sphere \and spherical curve \and space form}
\subclass{53A04 \and 53A05 \and 53B20 \and 53C21}
\end{abstract}

\section{Introduction}
\label{intro}

The study of (geodesic) spheres plays a fundamental role in the theory of Riemannian manifolds due to their simplicity and remarkable properties. Amazingly, in some contexts the behavior of spheres suffices to describe the geometry of the ambient manifold as a whole. More precisely, it is known to be possible to characterize space forms, i.e., Riemannian manifolds with constant sectional curvature, in terms of their geodesic spheres. Indeed, we can mention the characterizations of space forms by employing the geodesics \cite{AdachiBAMS2000}, the mean curvature and volume function \cite{ChenCrelle1981}, or {the} umbilicity \cite{VanheckePRSocEdinburgh1979} of geodesic spheres.  The main goal of this work is to add new items to this list by investigating the behavior of geodesic spherical curves with the help of the so-called rotation minimizing (RM) frames. (Concerning the geometry of curves, we may also mention the characterization of space forms in terms of {the validity of} the Frenet theorem \cite{CastrillonLopezAM2015}, i.e., the 1-1 correspondence between curves and curvature/torsion up to isometries.)

The characterization of spherical curves in Euclidean space is a relatively well known subject. On the other hand, in a recent work \cite{daSilvaMJM2018}, the present authors went beyond the Euclidean space and were able to characterize geodesic spherical curves in $\mathbb{H}^{m+1}(r)$ and $\mathbb{S}^{m+1}(r)$: any spherical curve $\alpha$ in one of these spaces is associated with an equation of the form
\begin{equation}
\sum_{i=1}^ma_i\kappa_i(s)-\lambda=0,\label{eq::SimpleCurvesEq}
\end{equation}
where $a_1,\dots,a_m$ are constants, $\kappa_1,\dots,\kappa_m$ are the curvatures with respect to a rotation minimizing frame $\{\mathbf{t},\mathbf{n}_1,\dots,\mathbf{n}_m\}$ along $\alpha(s)$, and $\lambda$ is a constant whose value depends on the radii of the geodesic sphere and ambient space. 

A fundamental step in the proof of Eq. (\ref{eq::SimpleCurvesEq}) consisted in the observation that, along $\alpha$, the unit normal $\xi$ of a geodesic sphere $G(p,R)$ can be written as $
\xi(s) = \sum_{i=1}^ma_i\mathbf{n}_i(s),\,a_i\,\mbox{ constant}$, and in addition that $\xi(s)$ minimizes rotation along $\alpha$, i.e., $\nabla_{\alpha'}\xi$ is a multiple of $\alpha'$ (see Corollary 1 of Ref. \cite{daSilvaMJM2018}). In this work, we add the important observation that the quantity $\lambda$ above could be identified with the mean curvature $H$ of the geodesic sphere (see Theorems \ref{thr::NtuIFFxiRM} and \ref{thr::SpcFrmViaTUspheresUsingRM}). This follows from the equality $\nabla_{\alpha'}\xi=-\lambda\alpha'$ for every spherical curve $\alpha$, which means that all principal curvatures are equal and, then, $H=\lambda$. In addition, this reasoning provides an alternative proof, in terms of RM frames, for the well known result
\newline
\textbf{Theorem A}\label{thr::GeoSphInCteCurvAreTotalUmb}
\textit{ Every geodesic sphere in $\mathbb{R}^{m+1}$, $\mathbb{H}^{m+1}(r)$, and  $\mathbb{S}^{m+1}(r)$, is a totally umbilical hypersurface.}

The reader may consult \cite{Spivak1979v4}, or \cite{VanheckePRSocEdinburgh1979}, for proofs of the above theorem via other techniques. (When $\mathbb{H}^{m+1}(r)$ and  $\mathbb{S}^{m+1}(r)$ are modeled as hyperspheres in Euclidean and Lorentzian spaces, respectively, their geodesic spheres can be fully described through intersections with hyperplanes: see, e.g., Figs. 1(b) and 1(c) of \cite{daSilvaMJM2018}.) We remark that the {converse} of Theorem A is also valid:
\newline
\newline
\textbf{Theorem B (Kulkarni, Vanhecke, Willmore, and Chen \cite{kulkarniPAMS1975,VanheckePRSocEdinburgh1979,ChenCrelle1981})} \label{thr::CharSpcFrmViTotalUmbGeoSph}
\textit{Let $M^{m+1}$ be an $(m+1)$-dimensional connected Riemannian manifold ($m\geq 2$). Then, $M^{m+1}$ is a space form if, and only if, every sufficiently small geodesic sphere in $M^{m+1}$ is totally umbilical.} 

Notice that in general we have to restrict ourselves to work with sufficiently small radii because out of the injectivity radius geodesic spheres may fail to be properly defined or to be a smooth hypersurface. On the other hand, for $\mathbb{R}^{m+1}$, $\mathbb{H}^{m+1}(r)$, and  $\mathbb{S}^{m+1}(r)$ no restriction has to be {imposed} at all since the exponential map in such ambient spaces is always globally defined.

Now, taking into account that, according to the sign of the sectional curvature, every space form is locally isometric to $\mathbb{H}^{m+1}(r)$, $\mathbb{R}^{m+1}$, or  $\mathbb{S}^{m+1}(r)$ \cite{Spivak1979v4}, the Theorems A and B above suggest the following problem:
\newline
\newline
\textbf{ {Main Problem:}}
Assume that every curve on a geodesic sphere $G(p,R)$ of a connected manifold $M^{m+1}$ satisfies Eq. (\ref{eq::SimpleCurvesEq}), with $\lambda=H$ the mean curvature of $G$ and $R$ sufficiently small. Does $M^{m+1}$ have constant sectional curvature?
\newline

In this work we shall prove that {the Main Problem} is answered in the affirmative. The strategy consists in first showing that: (i) the unit normal of a hypersurface $\Sigma^m\subset M^{m+1}$ along a curve $\alpha:I\to \Sigma$ minimizes rotation if, and only if, $\alpha$ satisfies Eq. (\ref{eq::SimpleCurvesEq}) with $\lambda=H$ the mean curvature of $\Sigma$; and (ii) a submanifold $\Sigma^m\subset M^{m+1}$ is totally umbilical if, and only if, condition (i) is valid for every curve $\alpha:I\to \Sigma$. These two results together {give} a characterization of Riemannian space forms, in terms of spherical curves, as a corollary of Theorem B.  In addition, we also show the possibility of characterizing Riemannian space forms via the total torsion of closed spherical curves and also in terms of an inequality involving the curvature function of a curve and the mean curvature of the hypersurface where the curve lies. Finally, we show that these same results can be extended to semi-Riemannian geometry.

The {remainder} of this work is divided as follows. In Sect. 2 we present some background material. In Sect. 3 we characterize totally umbilical hypersurfaces via the concept of RM frames, provide a new proof for Theorem B above, and characterize Riemannian space forms using their spherical curves. In the two following sections, we provide  characterizations of totally umbilical hypersurfaces and space forms via an inequality involving the mean curvature of a hypersurface and the curvature function of their curves (Sect. 4) and via the total torsion of closed spherical curves in the 3-dimensional case (Sect. 5). Finally, in Sect. 6, we discuss the extension of the results presented in Sections 3 and 4 to the semi-Riemannian setting.

\section{Differential geometric background}

Let $M^{m+1}$ be an $(m+1)$-dimensional Riemannian manifold of class at least {$C^3$} with metric $\langle\cdot,\cdot\rangle$, Levi--Civita connection $\nabla$, and Riemann curvature tensor
\begin{equation}
R(X,Y)Z=\nabla_Y\nabla_XZ-\nabla_X\nabla_YZ+\nabla_{[X,Y]}Z,
\end{equation}
where $X,Y$, and $Z$ are tangent vector fields in $M$. In addition, if $X,Y\in T_pM$, the \textit{sectional curvature} of $\mbox{span}\{X,Y\}\subset T_pM$ at $p$ is 
\begin{equation}
K_p(X,Y)=\frac{\langle R(X,Y)X,Y\rangle_p}{\langle X,X\rangle_p\langle Y,Y\rangle_p-\langle X,Y\rangle_p^2}.
\end{equation}
We say that $M$ is a \textit{space form} if it has constant sectional curvature $K$. In addition, $M$ is locally isometric to a sphere $\mathbb{S}^{m+1}(r)$ if $K=r^{-2}$, to an Euclidean space $\mathbb{E}^{m+1}$ if $K=0$, and to a hyperbolic space $\mathbb{H}^{m+1}(r)$ if $K=-r^{-2}$  \cite{doCarmo1992,Spivak1979v4}.

A $C^k$ curve $\alpha:I\to M^{m+1}$ is \textit{regular} if $\mathbf{t}(s)=\alpha'(s)\not=0$, where $s$ denotes arc-length parameter, i.e., $\langle\mathbf{t}(s),\mathbf{t}(s)\rangle=1$. Usually, we equip a regular curve with the well know Frenet frame. However, as shown by Bishop in the 1970s \cite{BishopMonthly}, we can also consider another type of an orthonormal moving frame  with many useful geometric properties. We say that a unit vector field $\mathbf{x}\in\mathfrak{X}(M)$ normal to $\alpha$ is a \textit{Rotation Minimizing} (RM) \textit{vector field} along $\alpha:I\to M^{m+1}$ if $\nabla_{\mathbf{t}}\,\mathbf{x}=\lambda\,\mathbf{t}$ for some function $\lambda$, where $\mathfrak{X}(M)$ denotes the module of tangent vector fields of $M$. (This concept corresponds to a parallel transport with respect to the normal connection of the curve $\alpha$ \cite{Etayo2016}.) Thus, we say that $\{\mathbf{t},\mathbf{n}_1,\dots,\mathbf{n}_m\}$ is an \textit{RM frame} along $\alpha$ if each normal vector field $\mathbf{n}_i$ is RM. (The basic idea here is that $\mathbf{n}_i$ rotates only the necessary amount to remain normal to $\mathbf{t}=\alpha'$.) The equations of motion are then given by
\begin{equation}
\nabla_{\mathbf{t}}\,\mathbf{t}=\kappa_1\mathbf{n}_1+\dots+\kappa_m\mathbf{n}_m\mbox{ and }\nabla_{\mathbf{t}}\,\mathbf{n}_i=-\kappa_i\mathbf{t},\,i=1,\dots,m.\label{eq::RMEqs}
\end{equation}

\begin{example}
Let $\alpha:I\to M$ be a geodesic. Given a unit  vector $\mathbf{x}_0$  normal to $\mathbf{t}(s_0)$, let $\mathbf{x}$ be the parallel transport of $\mathbf{x}_0$ {along} $\alpha$. Since parallel transport preserves angles, $\mathbf{x}$ is a normal vector field along $\alpha$ and, in addition, since $\nabla_{\mathbf{t}}\,\mathbf{x}=0$, $\mathbf{x}$ minimizes rotation. In short, any orthonormal and parallel transported frame along a geodesic is RM. This also shows that, contrarily to the Frenet frame, an RM frame can be defined even if the curvature $\kappa=\Vert\nabla_{\mathbf{t}}\,\mathbf{t}\Vert$ of a curve $\alpha$ vanishes.  (It is worth mentioning that RM frames are uniquely defined only up to a rotation on the {hyperplane} normal to the curve  \cite{BishopMonthly}.)
\qed
\end{example}

Rotation minimizing frames are particularly useful in the characterization of spherical curves. Indeed, Bishop showed that an Euclidean curve is spherical if, and only if, there exist constants $a_1,\dots,a_m$ such that  $
\sum_{i=1}^ma_i\kappa_i(s)=1$ \cite{BishopMonthly}. (Bishop's equation can be rewritten in a more convenient form if we normalize the constants {$a_i$} by the radius of the sphere: the new constants can be then interpreted as the coordinates of the unit normal with respect to the basis $\{\mathbf{n}_i\}_{i=1}^m $, while the linear coefficient is the mean curvature, see Eq. (\ref{eq::SimpleCurvesEq}).) The same characterization for spherical curves was recently extended to hyperbolic spaces and spheres \cite{daSilvaMJM2018}. Here   we shall prove this is in fact a characteristic feature of totally umbilical hypersurfaces. 

The \textit{exponential map} at $p\in M$ in the direction of a unit vector $V\in T_pM$ is $\exp_p(uV)=\beta_V(u)$, where $u$ is sufficiently small and $\beta_{V}$ is the unique geodesic with initial conditions $\beta_V(0) =p$ and $\beta_V'(0)=V$. The exponential map defines a diffeomorphism of a neighborhood of $0\in T_pM$ into a neighborhood of $p\in M$. For a sufficiently small $R>0$, we may define in $M$ the \textit{geodesic sphere}  with center $p$ and radius $R$ as the submanifold $G(p,R)=\exp_p(\mathbb{S}^m(R))$, where $\mathbb{S}^m(R)=\{R\,V\in T_pM:\Vert V\Vert=1\}$. ($G(p,R)$ is a smooth submanifold for $R$ sufficiently small.) A natural choice for the unit normal $\xi$ of $G(p,R)$ at $q=\beta_{V}(R)$ is the vector  $\xi(q)=\beta_{V}'(R)$, i.e., the {tangent} at $u=R$ of the radial geodesic emanating from $p$. In the next section, we shall prove that $\xi$ is RM if, and only if, $M$ is a space form (see Theorems \ref{thr::NtuIFFxiRM} and \ref{thr::CharSpcFrmsViaSphrclCrv}).

Let $\Sigma^m\subset M^{m+1}$ be an orientable hypersurface with unit normal $\xi$. The \textit{shape operator}  of $\Sigma$ is defined by
\begin{equation}
S_p(X)=-(\nabla_X\xi)(p).
\end{equation}
Since $S_p$ is symmetric with respect to $\langle\cdot,\cdot\rangle$, in general, $S_p$ has $m$ eigenvalues $\lambda_1,\dots,\lambda_m$, known as the \textit{principal curvatures}.  A point $p\in \Sigma^m$ is \textit{umbilical} if every unit vector $X\in T_p\Sigma^m$ is an eigenvector of the shape operator $S_p$. When every point of $\Sigma^m$ is umbilical, $\Sigma^m$ is said to be a \textit{totally umbilical hypersurface}. A curve $\alpha:I\to \Sigma^m$ is a \textit{line of curvature} if for every $s\in I$, the tangent $\alpha'(s)$ is a \textit{principal direction}, i.e., the \textit{normal curvature} $\kappa_n=\langle \nabla_{\alpha'}\,\alpha',\xi\rangle=\langle S(\alpha'),\alpha'\rangle$ is an eigenvalue of $S$ along  the points of $\alpha$. The \textit{mean curvature} of $\Sigma^m$ at the point $p$ is then defined as $H=\frac{1}{m}(\lambda_1+\dots+\lambda_m)$.

\section{Rotation minimizing frames and Riemannian space forms}

The characterization of spherical curves presented in Ref. \cite{daSilvaMJM2018} has to do with the fact that geodesic spheres in those spaces are totally umbilical. Indeed, here we establish a characterization of totally umbilical hypersurfaces in terms of RM frames and, when restricted to geodesic spheres, it allows for a characterization of space forms. First, we shall characterize lines of curvature.

\begin{lemma}\label{lemma::RMnormalIFFlineCurv}
Let $\alpha:I\to \Sigma^m$ be a  $C^2$ regular curve in an orientable hypersurface $\Sigma^m\subset M^{m+1}$ {with unit normal $\xi$}. The following conditions are equivalent
\begin{enumerate}
\item $\xi$ is a rotation minimizing vector field along $\alpha$;
\item $\alpha$ is a line of curvature;
\item if we write $\xi(\alpha(s))=\sum_{i=1}^ma_i\mathbf{n}_i(s)$ along $\alpha$, then 
\begin{equation}
\displaystyle\sum_{i=1}^ma_i\kappa_i(s) = \kappa_n(\alpha(s))\mbox{ and } a_1,\dots,a_m\mbox{ are constants},\label{eq::NormalDevolopmentSphrCurves}
\end{equation}
where $\kappa_n$ denotes the normal curvature and $\kappa_i$ the $i$-th curvature associated with a given rotation minimizing frame $\{\alpha',\mathbf{n}_1,...,\mathbf{n}_m\}$ along $\alpha(s)$.
\end{enumerate}
\end{lemma}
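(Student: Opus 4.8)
The plan is to push everything through the RM equations of motion \eqref{eq::RMEqs} and reduce all three conditions to the single statement that the coefficients $a_i=\langle\xi,\mathbf{n}_i\rangle$ are constant along $\alpha$. Since $\mathbf{t}=\alpha'$ is tangent to $\Sigma$ and $\xi$ is its unit normal, $\langle\xi,\mathbf{t}\rangle=0$, so along $\alpha$ the field $\xi$ lies in $\mathrm{span}\{\mathbf{n}_1,\dots,\mathbf{n}_m\}$ and we may write $\xi(\alpha(s))=\sum_{i=1}^m a_i(s)\,\mathbf{n}_i(s)$ with $a_i(s)=\langle\xi(\alpha(s)),\mathbf{n}_i(s)\rangle$; the task is then just to compute $\nabla_{\mathbf{t}}\xi$.

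First I would differentiate: by \eqref{eq::RMEqs}, $\nabla_{\mathbf{t}}\xi=\sum_i\bigl(a_i'\,\mathbf{n}_i+a_i\,\nabla_{\mathbf{t}}\mathbf{n}_i\bigr)=-\bigl(\sum_i a_i\kappa_i\bigr)\mathbf{t}+\sum_i a_i'\,\mathbf{n}_i$. Pairing the first equation in \eqref{eq::RMEqs} with $\xi$ and using orthonormality of the $\mathbf{n}_i$ gives $\kappa_n=\langle\nabla_{\mathbf{t}}\mathbf{t},\xi\rangle=\sum_i a_i\kappa_i$, an identity that holds for \emph{every} regular curve on $\Sigma$, independently of the other conditions. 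Hence the relation $\sum_i a_i\kappa_i=\kappa_n$ in (3) is automatic, and condition (3) is equivalent to $a_1'=\dots=a_m'=0$.

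The equivalences are now read off the displayed formula for $\nabla_{\mathbf{t}}\xi$. Its component in the normal hyperplane is exactly $\sum_i a_i'\,\mathbf{n}_i$, so $\nabla_{\mathbf{t}}\xi$ is a multiple of $\mathbf{t}$ — i.e. $\xi$ is rotation minimizing along $\alpha$, which is (1) — if and only if all $a_i'$ vanish, which is (3); this settles (1)$\Leftrightarrow$(3). For (1)$\Leftrightarrow$(2), note $\nabla_{\mathbf{t}}\xi$ is automatically tangent to $\Sigma$ (it is orthogonal to $\xi$ because $\Vert\xi\Vert=1$), so $S(\mathbf{t})=-\nabla_{\mathbf{t}}\xi=\bigl(\sum_i a_i\kappa_i\bigr)\mathbf{t}-\sum_i a_i'\,\mathbf{n}_i$ is a tangent vector whose only component obstructing proportionality to $\mathbf{t}$ is $-\sum_i a_i'\mathbf{n}_i$. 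Therefore $\mathbf{t}=\alpha'$ is an eigenvector of $S$ — i.e. $\alpha$ is a line of curvature — precisely when $S(\mathbf{t})\parallel\mathbf{t}$, precisely when $\nabla_{\mathbf{t}}\xi\parallel\mathbf{t}$, precisely when $\xi$ is RM along $\alpha$; and when this holds the corresponding principal curvature is $\langle S(\mathbf{t}),\mathbf{t}\rangle=\kappa_n=\sum_i a_i\kappa_i$, in agreement with (3).

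There is no genuine obstacle here: the argument is a single differentiation plus bookkeeping with the orthonormal RM frame. The only points deserving a word of care are (a) recognizing that $\sum_i a_i\kappa_i=\kappa_n$ is not extra data but an identity, so the real content of (3) is the constancy of the $a_i$, and (b) observing that $\nabla_{\mathbf{t}}\xi$ has no $\xi$-component, so that ``$\nabla_{\mathbf{t}}\xi$ proportional to $\mathbf{t}$'' and ``$\mathbf{t}$ a principal direction'' are literally the same condition.
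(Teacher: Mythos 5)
Your proof is correct and follows essentially the same route as the paper: a direct differentiation of $\xi=\sum_i a_i\mathbf{n}_i$ via the RM equations, with the shape operator $S(\mathbf{t})=-\nabla_{\mathbf{t}}\xi$ mediating between (1) and (2). The one mild refinement over the paper's version is your observation that $\sum_i a_i\kappa_i=\kappa_n$ is an identity valid for \emph{every} curve on $\Sigma$ (so the whole content of (3) is the constancy of the $a_i$), whereas the paper derives that relation using the line-of-curvature hypothesis; both computations are otherwise the same.
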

\begin{proof}
($1\Leftrightarrow 2$) The unit normal $\xi$ of $\Sigma$ is RM along $\alpha$ if and only if
\begin{equation}
S_{\alpha(s)}(\alpha'(s))=-(\nabla_{\alpha'}\,\xi)(\alpha(s)) = \lambda(s) \,\alpha'(s).
\end{equation}
Then, we conclude that $\alpha'(s)$ is an eigenvector of $S$ for every $s$ if and only if $\xi$ is RM along $\alpha$. In addition, $\lambda(s)$ is precisely the normal curvature in the direction of $\alpha'(s)$, which is then a principal curvature. In short, $\xi$ is RM along $\alpha$ if and only if $\alpha$ is a line of curvature.

($2\Rightarrow 3$) Let $\{\alpha',\mathbf{n}_1,...,\mathbf{n}_m\}$ be an RM frame along $\alpha$. Since $\langle \alpha',\xi\rangle=0$, we have $\xi=\sum_{i=1}^ma_i\mathbf{n}_i$ along $\alpha$. Furthermore, taking the derivative of $\langle \alpha',\xi\rangle=0$, and using $\nabla_{\alpha'}\xi=-\kappa_n \alpha'$, gives
\begin{eqnarray}
0 & = & \langle \nabla_{\alpha'}\alpha', \xi\rangle + \langle \alpha',\nabla_{\alpha'}\xi\rangle\nonumber\\
& = & \langle\sum_i\kappa_i\mathbf{n}_i,\sum_ja_j\mathbf{n}_j\rangle-\kappa_n= \sum_{i=1}^ma_i\kappa_i-\kappa_n.
\end{eqnarray}
Now, taking the derivative of $a_i=\langle \xi,\mathbf{n}_i\rangle$ gives
\begin{equation}
a_i' = \langle\nabla_{\alpha'}\xi,\mathbf{n}_i\rangle+\langle \xi,\nabla_{\alpha'}\mathbf{n}_i\rangle= -\kappa_n\langle \alpha',\mathbf{n}_i\rangle-\kappa_i\langle \xi,\alpha'\rangle= 0
\end{equation}
and, therefore, the coefficients $a_1,\dots,a_m$ are constants. 

($3\Rightarrow2$) Let Eq. (\ref{eq::NormalDevolopmentSphrCurves}) be valid. The unit normal along $\alpha$ is then written as $\xi(s)=\sum_ia_i\mathbf{n}_i(s)$. Taking the derivative gives
\begin{equation}
\nabla_{\alpha'} \xi = \sum_{i=1}^ma_i\nabla_{\alpha'}\mathbf{n}_i= \left(-\sum_{i=1}^ma_i\kappa_i\right) \alpha'=-\kappa_n \alpha'
\end{equation}
and, thus, $\xi$ is RM along $\alpha:I\to \Sigma^m$ and $\alpha'$ is a principal direction. 
\qed
\end{proof}

\begin{theorem}\label{thr::NtuIFFxiRM}
An orientable hypersurface $\Sigma^m\subset M^{m+1}$ is totally umbilical if and only if {its} unit normal $\xi$ minimizes rotation along any regular curve $\alpha:I\to \Sigma$. In addition, Eq. (\ref{eq::NormalDevolopmentSphrCurves}), which is satisfied by every curve in $\Sigma,$ is rewritten as 
\begin{equation}
\sum_{i=1}^ma_i\kappa_i(s)=H(\alpha(s)),
\end{equation}
where $H$ is the mean curvature of $\Sigma.$
\end{theorem}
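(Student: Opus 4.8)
The plan is to reduce everything to Lemma \ref{lemma::RMnormalIFFlineCurv}, which already provides, for a \emph{single} curve, the equivalence of ``$\xi$ is a rotation minimizing vector field along $\alpha$'', ``$\alpha$ is a line of curvature'', and the linear relation \eqref{eq::NormalDevolopmentSphrCurves}. The theorem is then obtained simply by quantifying over all regular curves $\alpha$ in $\Sigma$.

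First I would prove the forward implication. If $\Sigma$ is totally umbilical, then at each $p\in\Sigma$ the shape operator is a multiple of the identity, $S_p=\lambda(p)\,\mathrm{Id}$, so every tangent direction is principal; hence every regular curve $\alpha:I\to\Sigma$ is a line of curvature, and the implication $(2)\Rightarrow(1)$ of Lemma \ref{lemma::RMnormalIFFlineCurv} shows that $\xi$ minimizes rotation along $\alpha$. Conversely, assume $\xi$ is rotation minimizing along every regular curve in $\Sigma$. Fix $p\in\Sigma$ and an arbitrary unit vector $X\in T_p\Sigma$, and pick a $C^2$ regular curve $\alpha$ with $\alpha(0)=p$ and $\alpha'(0)=X$ (for instance a geodesic of $\Sigma$, well defined since $M$ is at least $C^3$). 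By $(1)\Rightarrow(2)$ of the Lemma, $\alpha$ is a line of curvature, so $X$ is an eigenvector of $S_p$. Since $X$ was arbitrary, every line in $T_p\Sigma$ is invariant under $S_p$, which forces $S_p$ to be a scalar multiple of the identity; thus $p$ is umbilical, and as $p$ was arbitrary, $\Sigma$ is totally umbilical.

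For the additional statement, note that at an umbilical point $q=\alpha(s)$ one has $S_q=\lambda(q)\,\mathrm{Id}$, so the normal curvature in any direction equals $\lambda(q)$ and the mean curvature is $H(q)=\frac{1}{m}\bigl(\lambda(q)+\dots+\lambda(q)\bigr)=\lambda(q)$; in particular $\kappa_n(\alpha(s))=H(\alpha(s))$. Substituting this into \eqref{eq::NormalDevolopmentSphrCurves} (item (3) of Lemma \ref{lemma::RMnormalIFFlineCurv}) gives $\sum_{i=1}^m a_i\kappa_i(s)=H(\alpha(s))$, as claimed.

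The argument is almost immediate once Lemma \ref{lemma::RMnormalIFFlineCurv} is available; the only mild point worth care is the linear-algebra step that a linear map on a space of dimension $m\geq 2$ leaving every line invariant must be a scalar multiple of the identity (here $S_p$ is moreover symmetric), together with the routine observation that through any point of $\Sigma$ and in any prescribed tangent direction there is a $C^2$ regular curve in $\Sigma$ to which the Lemma can be applied. So I do not expect a genuine obstacle here: the substantive content has been front-loaded into the Lemma.
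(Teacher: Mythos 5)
Your proposal is correct and follows essentially the same route as the paper: both reduce the theorem to Lemma \ref{lemma::RMnormalIFFlineCurv} by quantifying over all curves, using that (with the paper's definition) total umbilicity is exactly the statement that every tangent vector is an eigenvector of $S_p$, and then identifying $\kappa_n$ with $H$ at umbilical points. You merely spell out more explicitly the converse step (choosing a curve through each point in each direction) and the linear-algebra fact that $S_p$ must then be scalar, which the paper leaves implicit.
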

\begin{proof}
The total umbilicity means that every tangent vector is an eigenvector of  $S(X)=-\nabla_X\xi$, i.e., every curve is a line of curvature. Consequently, the normal curvature $\kappa_n(q,X)$ only depends on $q$ and not on $X$, and then 
\begin{equation}
H(q)=\frac{1}{m}\sum_{i=1}^m\kappa_n({q})=\kappa_n(q).
\end{equation}
The desired result then follows from Lemma \ref{lemma::RMnormalIFFlineCurv}.   
\qed
\end{proof}

We now apply the concept of minimizing rotation to provide a simple proof for Theorem B which, to the best of our knowledge, was first proved by Kulkarni \cite{kulkarniPAMS1975}. (See also \cite{ChenCrelle1981} and \cite{VanheckePRSocEdinburgh1979} for alternative demonstrations.)

\begin{theorem}\label{thr::SpcFrmViaTUspheresUsingRM}
Let $M^{m+1}$ be a connected Riemannian manifold ($m\geq 2$). Then, $M^{m+1}$ is a space form if, and only if, every sufficiently small geodesic sphere in $M^{m+1}$ is totally umbilical. 
\end{theorem}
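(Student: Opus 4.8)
The plan is to prove the two implications by rather different means. The ``only if'' direction is essentially a transfer of Theorem A through local isometries: if $M^{m+1}$ has constant curvature $K$, then around any $p\in M$ there is a neighborhood $U$ isometric to an open set in $\mathbb{E}^{m+1}$, $\mathbb{S}^{m+1}(r)$, or $\mathbb{H}^{m+1}(r)$ according to the sign of $K$; for $R$ smaller than both the injectivity radius at $p$ and the distance from $p$ to $M\setminus U$, the geodesic sphere $G(p,R)$ lies in $U$ and is carried by the isometry onto the geodesic sphere of radius $R$ in the model space, which is totally umbilical by Theorem A. Since umbilicity is an isometry-invariant local condition, $G(p,R)$ is totally umbilical.

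For the ``if'' direction, the substantive point is to extract curvature information from umbilicity, and the cleanest route reads Jacobi fields off in an RM (parallel) frame. Fix $p\in M$, a unit $V\in T_pM$, and let $\gamma(u)=\exp_p(uV)$, $u\in[0,\epsilon)$. By the Example in Sect.~2, any orthonormal frame $\{\mathbf{e}_1,\dots,\mathbf{e}_m\}$ obtained by parallel transport of vectors in $V^\perp$ along $\gamma$ is an RM frame along $\gamma$. Let $Y_i$ be the Jacobi field with $Y_i(0)=0$ and $\nabla_{\gamma'}Y_i(0)=\mathbf{e}_i(0)$; the $Y_i(u)$ span $T_{\gamma(u)}G(p,u)$, are orthogonal to $\gamma'$ by the Gauss lemma, and satisfy $Y_i(u)=u\,\mathbf{e}_i(u)+O(u^2)$. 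Since the unit normal of $G(p,u)$ at $\gamma(u)$ is $\gamma'(u)$ and $Y_i$ is a variation field of radial geodesics, $\nabla_{\gamma'}Y_i=\nabla_{Y_i}\gamma'=-S(Y_i)$ with $S$ the shape operator of $G(p,u)$. If every small geodesic sphere is totally umbilical then $S=\mu(u)\,\mathrm{Id}$ on $T_{\gamma(u)}G(p,u)$, so writing $Y_i=\sum_j y_{ij}\mathbf{e}_j$ each coefficient solves the \emph{same} scalar linear ODE $y_{ij}'=-\mu\,y_{ij}$; comparing with $y_{ij}=u\,\delta_{ij}+O(u^2)$ forces $y_{ij}\equiv 0$ for $i\neq j$ and all $y_{ii}$ to coincide, i.e.\ $Y_i=y\,\mathbf{e}_i$ for a single scalar $y$. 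Substituting this into the Jacobi equation and using $\nabla_{\gamma'}\mathbf{e}_i=0$ gives
\begin{equation}
R(\mathbf{e}_i(u),\gamma'(u))\gamma'(u)=-\tfrac{y''(u)}{y(u)}\,\mathbf{e}_i(u),\qquad i=1,\dots,m,
\end{equation}
hence by linearity $W\mapsto R(W,\gamma'(u))\gamma'(u)$ is a scalar multiple of the identity on $\gamma'(u)^\perp$; equivalently $K_{\gamma(u)}(\gamma'(u),W)$ is independent of $W$.

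Letting $p$ and $V$ vary, this says that at every $q\in M$ and for every unit $V\in T_qM$ the sectional curvature $K_q(V,W)$ does not depend on $W\in V^\perp$. Fixing $q$, using $K_q(V,W)=K_q(W,V)$, and joining any two unit vectors through a vector orthogonal to both (possible since $\dim T_qM=m+1\geq 3$) shows that $K_q(\sigma)$ is the same for all $2$-planes $\sigma\subset T_qM$. Finally, as $M$ is connected and $\dim M\geq 3$, Schur's lemma promotes this pointwise constancy to a global constant, so $M^{m+1}$ is a space form. (This also completes the circle announced after the definition of the exponential map: $\xi$ is RM along every spherical curve exactly when all geodesic spheres are totally umbilical, which by the above is equivalent to $M$ being a space form.)

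I expect the middle step to be the crux: converting total umbilicity into the statement that $R(\cdot,\gamma')\gamma'$ is scalar on $\gamma'^\perp$. The remaining ingredients — the ``pointwise isotropic $\Rightarrow$ isotropic'' linear-algebra step (which uses $m\geq 2$) and Schur's lemma (which uses connectedness and $\dim\geq 3$) — are routine, but one should be mildly careful that $\mu$ is a priori only a function on the punctured neighborhood of $p$, not a constant on each sphere; this is all the argument actually uses, and it is also why the normalization of the constants $a_i$ in Theorem~\ref{thr::NtuIFFxiRM} matters.
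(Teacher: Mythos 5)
Your proof is correct, and the converse direction takes a genuinely different route from the paper's. For ``space form $\Rightarrow$ umbilical spheres'' both arguments transfer the model-space result through local isometries (the paper routes this through the RM property of the radial normal, i.e., Corollary 1 of \cite{daSilvaMJM2018} combined with Theorem \ref{thr::NtuIFFxiRM}, whereas you invoke Theorem A directly; same substance). For the substantive direction the paper does not use Jacobi fields at all: it builds the ruled $2$-surface $f(u,s)=\exp_p(uV(s))$, extends $X_q,Y_q$ to the coordinate fields $X=\partial/\partial s$ and $Y=\partial/\partial u$, and computes $R(Y,X)Y=\nabla_Y(\lambda X)=(\partial\lambda/\partial u-\lambda^2)X$ directly from the umbilicity relation $\nabla_XY=-\lambda X$ together with $\nabla_YY=0$ and $[X,Y]=0$; the coefficient is manifestly independent of $X_q$, and Schur finishes. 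Your Jacobi-field computation reaches the same intermediate statement --- $R(\cdot,\gamma')\gamma'$ is scalar on $(\gamma')^{\perp}$ --- at the cost of the extra ODE step showing $Y_i=y\,\mathbf{e}_i$; that step is sound (the scalar equation $y'=-\mu y$ is singular at $u=0$ since $\mu\sim 1/u$, but you only compare solutions on $(0,\epsilon)$ against the asymptotics $y_{ij}=u\,\delta_{ij}+O(u^2)$, and a not-identically-zero solution of a first-order linear ODE never vanishes, so the ratios $y_{ij}/y_{11}$ are constant and equal to $\delta_{ij}$). What your version buys is that it makes explicit the ``pointwise isotropic in every direction $\Rightarrow$ isotropic'' linear-algebra step using $\dim\geq 3$, which the paper compresses into ``from the arbitrariness in the choice of $X_q,Y_q$''; what the paper's version buys is brevity and the explicit Riccati relation $K=\partial_u\lambda-\lambda^2$ linking the sectional curvature to the umbilicity factor of the spheres.
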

\begin{proof}
Assume all sufficiently small geodesic spheres  in $M$ are totally umbilical  or, equivalently, that the unit normal is RM along any geodesic spherical curve. Given unit vectors $X_q,Y_q\in T_qM$ at $q$, we can choose  $p\in M$ sufficiently close to $q$ such that  the geodesic sphere $G(p,R)$ passing at $q$ is tangent to $X_q$ and normal to $Y_q$. Notice that there exists a unit speed curve $V:(-\varepsilon,\varepsilon)\to \mathbb{S}^{m}(1)\subset T_pM$ such that  the 2-surface $f(u,s)=\exp_p(uV(s))$ leads to
\begin{equation}
q=f(R,0),\,\frac{\partial f}{\partial s}(R,0)=X_q,\mbox{ and }\frac{\partial f}{\partial u}(R,0)=Y_q.
\end{equation}
We may extend $X_q$ and $Y_q$ to $X=\partial/\partial s$ and  $Y=\partial/\partial u$, respectively. Then $\nabla_{Y}Y=0$ and, in addition, $[X,Y]=0$, {which implies $\nabla_XY=\nabla_YX$.} From the umbilicity condition, {i.e.,} $\nabla_XY=-\lambda X$, it follows 
\begin{eqnarray}
R(Y,X)Y &=& \nabla_X\nabla_YY-\nabla_Y\nabla_XY+\nabla_{[Y,X]}Y\nonumber\\
& = & \nabla_Y(\lambda X)= \left(\frac{\partial\lambda}{\partial u}-\lambda^2\right)X.
\end{eqnarray}
The sectional curvature in the direction of $\mbox{span}\{X_q,Y_q\}$ at $q$ is then given by
\begin{equation}
K_q(X,Y)=\frac{\langle R(Y,X)Y,X\rangle}{\langle X,X\rangle\langle Y,Y\rangle-\langle X,Y\rangle^2}=\left(\frac{\partial\lambda}{\partial u}-\lambda^2\right)\Big\vert_{u=R},
\end{equation}
which only depends on $q$. Therefore, from the arbitrariness in the choice of $X_q,Y_q$, the sectional curvature $K$ is a function of the base point only and, then, $M$ is an isotropic manifold. Finally, by the Schur theorem \cite{doCarmo1992}, the sectional curvature should be a constant.

{Conversely}, let $M$ be a space form. Since the proportionality between $X$ and $\nabla_X\xi$  does not change under isometries and any space form $M$ is locally isometric to $\mathbb{H}^{m+1}(r)$, $\mathbb{R}^{m+1}$, or $\mathbb{S}^{m+1}(r)$ \cite{doCarmo1992,Spivak1979v4}, we may restrict ourselves to this particular context. Now, using that the unit normal $\xi$ along a spherical curve $\alpha(s)=\exp_p(uV(cs))$  may be written as $\xi(s)=\frac{\partial}{\partial u}\exp_p(uV(cs))\vert_{u=R}$, for some convenient unit speed curve $V:I\to \mathbb{S}^m(1)\subset T_pM$ {and constant $c$}, it follows from Corollary 1 of Ref. \cite{daSilvaMJM2018} that $\xi$ is RM along any spherical curve and, consequently, from Theorem \ref{thr::NtuIFFxiRM} we conclude that all geodesic spheres in $\mathbb{H}^{m+1}(r)$, $\mathbb{R}^{m+1}$, and $\mathbb{S}^{m+1}(r)$ are totally umbilical.
\qed
\end{proof}

Since Riemannian space forms are characterized by the property that every geodesic sphere is totally umbilical, our main result  follows as a corollary of Theorems \ref{thr::NtuIFFxiRM}  and \ref{thr::SpcFrmViaTUspheresUsingRM}.
\begin{theorem}\label{thr::CharSpcFrmsViaSphrclCrv}
Let $M^{m+1}$ be a connected Riemannian manifold ($m\geq 2$). Then, $M$ is a space form if, and only if, for every regular $C^2$ curve $\alpha$ on a geodesic sphere, with sufficiently small radius, there exist constants $a_i$ such that
\begin{equation}
\sum_{i=1}^ma_i\kappa_i(s)=H(s),\,\xi=\sum_{i=1}^ma_i\,\mathbf{n}_i,
\end{equation}
where $H$ is the mean curvature of the geodesic sphere which contains $\alpha$, $\kappa_i$ are the curvatures associated with a rotation minimizing frame $\{\mathbf{t},\mathbf{n}_1,\dots,\mathbf{n}_m\}$ along $\alpha$, and $\xi$ is a  {unit vector field normal} to the geodesic sphere. 
\end{theorem}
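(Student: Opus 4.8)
The plan is to obtain this statement as a corollary of Theorems~\ref{thr::NtuIFFxiRM} and~\ref{thr::SpcFrmViaTUspheresUsingRM}, together with the short computation already used in the proof of Lemma~\ref{lemma::RMnormalIFFlineCurv}. The key observation is that the two displayed conditions are, on the one hand, exactly the conclusion of Theorem~\ref{thr::NtuIFFxiRM} applied to a geodesic sphere and, on the other hand, strong enough to force the shape operator of that geodesic sphere to be a scalar multiple of the identity at every point.

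For the ``only if'' direction I would argue as follows. If $M$ is a space form, Theorem~\ref{thr::SpcFrmViaTUspheresUsingRM} guarantees that every geodesic sphere $G(p,R)$ with $R$ sufficiently small is totally umbilical. Fixing such a sphere and applying Theorem~\ref{thr::NtuIFFxiRM} with $\Sigma=G(p,R)$, the unit normal $\xi$ minimizes rotation along every regular curve $\alpha\subset G(p,R)$; hence, writing $\xi=\sum_i a_i\mathbf{n}_i$ in an RM frame $\{\mathbf{t},\mathbf{n}_1,\dots,\mathbf{n}_m\}$ along $\alpha$, Lemma~\ref{lemma::RMnormalIFFlineCurv} and Theorem~\ref{thr::NtuIFFxiRM} yield that the $a_i$ are constants and that $\sum_i a_i\kappa_i(s)=H(\alpha(s))$, which is the asserted equation.

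For the ``if'' direction, suppose the displayed identity holds for every regular $C^2$ curve on every geodesic sphere of sufficiently small radius. Fix one such sphere $G(p,R)$ and a curve $\alpha$ on it; since $\xi=\sum_i a_i\mathbf{n}_i$ along $\alpha$ with the $a_i$ constant, differentiating and using the RM equations~(\ref{eq::RMEqs}) gives
\begin{equation}
\nabla_{\alpha'}\xi=\sum_{i=1}^m a_i\nabla_{\alpha'}\mathbf{n}_i=-\Big(\sum_{i=1}^m a_i\kappa_i\Big)\alpha'=-H(\alpha(s))\,\alpha',
\end{equation}
so that $S_{\alpha(s)}(\alpha'(s))=H(\alpha(s))\,\alpha'(s)$. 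Thus every vector tangent to $G(p,R)$ at a point $q$ --- every tangent direction being realized by some regular $C^2$ curve on $G(p,R)$ --- is an eigenvector of the symmetric operator $S_q$ with the single eigenvalue $H(q)$, whence $S_q=H(q)\,\mathrm{Id}$; that is, $G(p,R)$ is totally umbilical. Since this holds for all sufficiently small geodesic spheres, Theorem~\ref{thr::SpcFrmViaTUspheresUsingRM} implies that $M$ is a space form.

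I do not expect a genuine obstacle here, since the substantive work is carried by Theorems~\ref{thr::NtuIFFxiRM} and~\ref{thr::SpcFrmViaTUspheresUsingRM}; the points requiring a little care are (i) that the hypothesis must be invoked for geodesic spheres centered at every point of $M$, so as to control the sectional curvature everywhere before applying Schur's theorem inside Theorem~\ref{thr::SpcFrmViaTUspheresUsingRM}, and (ii) that the constants $a_i$ and the RM frame depend on the chosen curve, while the computation above is insensitive to these choices. One should also retain the ``sufficiently small radius'' proviso throughout, since beyond the injectivity radius geodesic spheres need not be smooth hypersurfaces.
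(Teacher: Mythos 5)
Your proposal is correct and follows exactly the route the paper intends: the paper states this theorem as an immediate corollary of Theorems~\ref{thr::NtuIFFxiRM} and~\ref{thr::SpcFrmViaTUspheresUsingRM} without writing out the details, and your argument simply fills in those details (the differentiation of $\xi=\sum_i a_i\mathbf{n}_i$ being the $3\Rightarrow 2$ step of Lemma~\ref{lemma::RMnormalIFFlineCurv}). No discrepancy with the paper's approach.
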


\begin{remark}
In \cite{daSilvaMJM2018} it was shown that Frenet frames can {be} used to characterize geodesic spherical curves in $\mathbb{S}^{m+1}(r)$ and $\mathbb{H}^{m+1}(r)$: e.g., a $C^4$ regular curve  $\alpha:I\to\mathbb{S}^3(r)$, or $\mathbb{H}^3(r)$, lies on a sphere if, and only if, $(\frac{1}{\tau}(\frac{1}{\kappa})')'+\frac{\tau}{\kappa}=0$, where  $\kappa$ and $\tau$ denote the curvature and torsion, respectively. (Theorem 4 of \cite{daSilvaMJM2018}.)  A Frenet frame $\{\alpha',\mathbf{n},\mathbf{b}\}$ can  be also used to investigate RM unit normals. {Indeed}, if $\Sigma^2\subset M^3$ is totally umbilical, its normal $\xi$ is RM along any $\alpha:I\to \Sigma^2$, i.e., $-\nabla_{\alpha'}\xi=\lambda\alpha'$. Writing $\xi(s)=c_1(s)\mathbf{n}(s)+c_2(s)\mathbf{b}(s)$ gives
\begin{eqnarray}
\nabla_{\alpha'}\xi & = & c_1'\mathbf{n}+c_2'\mathbf{b}+c_1\nabla_{\alpha'}\mathbf{n}+c_2\nabla_{\alpha'}\mathbf{b}\\
-\lambda\alpha' & = & -c_1\kappa\,\alpha'+(c_1'-\tau c_2)\mathbf{n}+(c_2'+\tau c_1)\mathbf{b}.
\end{eqnarray}
We then conclude that $c_1=\lambda/\kappa$, $c_1'=c_2\tau$, and $c_2'=-\tau c_1$, which lead to
\begin{equation}\label{eq::SphCrvODEviaKappaTau}
\frac{\mathrm{d}}{\mathrm{d}s}\left[\frac{1}{\tau}\frac{\mathrm{d}}{\mathrm{d}s}\left(\frac{\lambda}{\kappa}\right)\right]+\tau\,\frac{\lambda}{\kappa}=0.
\end{equation}
{Conversely, if Eq. \eqref{eq::SphCrvODEviaKappaTau} is valid, then $\xi=\frac{\lambda}{\kappa}\mathbf{n}+\frac{1}{\tau}(\frac{\lambda}{\kappa})'\,\mathbf{b}$ satisfies $\nabla_{\alpha'}\xi=-\lambda \alpha'$, and also $\frac{\mathrm{d}}{\mathrm{d} s} \langle\xi,\xi\rangle=\frac{\mathrm{d}}{\mathrm{d} s}\{(\frac{\lambda}{\kappa})^2
+[\frac{1}{\tau}(\frac{\lambda}{\kappa})']^2\}=0$. Thus, the normal to $\Sigma$ is RM along all $\alpha$ in $\Sigma$.} {Note that geodesic} spheres on a space form are totally umbilical and, in addition, they have constant mean curvature \cite{Spivak1979v4}. In such cases, $\lambda$ is a constant and can be canceled {out in Eq. \eqref{eq::SphCrvODEviaKappaTau}}, as in Theorem 4 of \cite{daSilvaMJM2018}.
\end{remark}

\section{Curvature of geodesic spherical curves}

We now present a characterization of Riemannian space forms based on an inequality involving the curvature $\kappa=\Vert\nabla_{\alpha'}\alpha'\Vert$ of $\alpha:I\to \Sigma^m\subset M^{m+1} $ and the mean curvature $H$ of $\Sigma^m$. 

In \cite{KimMonthly2003} Baek \textit{et al.} proved that it is possible to characterize Euclidean spheres in terms of the curvature function of spherical curves:
\newline
\textbf{Theorem C (Baek, Kim, and Kim \cite{KimMonthly2003})}\label{thr::CharacEuclSpheresKoreanos}
\textit{Let $r>0$ be a constant and $\Sigma^m\subset\mathbb{R}^{m+1}$ be a closed hypersurface such that for every $\,C^2$ regular curve $\alpha:I\to \Sigma^m$ it is valid}  
\begin{equation}
\kappa \geq \frac{1}{r},
\end{equation}
\textit{where $\kappa$ is the curvature function of $\alpha$ in $\mathbb{R}^{m+1}$ and  equality holds for geodesics only. Then, $\Sigma^m$ is a sphere of radius $r$.}
\newline

This result has to do with the fact that spheres in Euclidean space are totally umbilical. (By letting $r\to\infty$ we can include planes in the criterion above.) A crucial observation to extend this theorem to Riemannian manifolds is that $1/r$ may be replaced by the mean curvature of $\Sigma^m\subset M^{m+1}$. Indeed,   
\begin{theorem}\label{thr::CharOfTUviaKappaAndH}
Let $\Sigma^m\subset M^{m+1}$ be an orientable hypersurface. Then, $\Sigma^m$ is totally umbilical if, and only if, for every regular $C^2$ curve $\alpha:I\to \Sigma^m$ it is valid the inequality
\begin{equation}
\kappa(s) \geq \vert H(\alpha(s)) \vert,\label{eq::CharOfTUviaKappaAndH}
\end{equation}
where $H$ is the mean curvature of $\Sigma^m$ and $\kappa$ is the curvature function of $\alpha$ in $M^{m+1}$. In addition, equality only holds when $\alpha$ is a geodesic. 
\end{theorem}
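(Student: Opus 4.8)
The proof rests on the orthogonal splitting of the acceleration of a curve lying in $\Sigma$. If $\alpha:I\to\Sigma^m$ is unit speed and $\xi$ is the chosen unit normal, the Gauss formula gives $\nabla_{\alpha'}\alpha'=\nabla^{\Sigma}_{\alpha'}\alpha'+\kappa_n\,\xi$, where $\nabla^{\Sigma}$ is the induced Levi--Civita connection on $\Sigma$, $\kappa_n=\langle S(\alpha'),\alpha'\rangle$ is the normal curvature, and $\nabla^{\Sigma}_{\alpha'}\alpha'$ is tangent to $\Sigma$, hence orthogonal to $\xi$. Writing $\kappa_g=\Vert\nabla^{\Sigma}_{\alpha'}\alpha'\Vert$ and recalling $\kappa=\Vert\nabla_{\alpha'}\alpha'\Vert$, this yields the Pythagorean identity $\kappa(s)^2=\kappa_g(s)^2+\kappa_n(s)^2$, with $\alpha$ a geodesic of $\Sigma$ exactly when $\kappa_g\equiv0$. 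I would establish this identity first, since it drives both implications.

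For the ``only if'' part, if $\Sigma$ is totally umbilical then $S_p=H(p)\,\mathrm{Id}$ on $T_p\Sigma$ for every $p$ — the common principal curvature equals the mean curvature, as already observed in the proof of Theorem~\ref{thr::NtuIFFxiRM} — so $\kappa_n(s)=H(\alpha(s))$ for any curve, whence $\kappa(s)^2=\kappa_g(s)^2+H(\alpha(s))^2\ge H(\alpha(s))^2$. Equality at a parameter $s$ forces $\kappa_g(s)=0$, so $\kappa\equiv|H\circ\alpha|$ holds precisely for geodesics of $\Sigma$.

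For the ``if'' part, fix $p\in\Sigma$ and a unit vector $X\in T_p\Sigma$, and let $\gamma$ be the $\Sigma$-geodesic with $\gamma(0)=p$ and $\gamma'(0)=X$; it is a regular curve of class at least $C^2$ defined near $0$. Along $\gamma$ we have $\kappa_g\equiv0$, so $\kappa_\gamma(0)=|\kappa_n(0)|=|\langle S_p(X),X\rangle|$, and the hypothesis $\kappa\ge|H|$ gives $|\langle S_p(X),X\rangle|\ge|H(p)|$ for every unit $X\in T_p\Sigma$. Hence the continuous quadratic form $X\mapsto\langle S_p(X),X\rangle$, whose range on the unit sphere of $T_p\Sigma$ is the interval $[\lambda_{\min},\lambda_{\max}]$ between the extreme principal curvatures, is disjoint from the open interval $(-|H(p)|,|H(p)|)$; being an interval, it lies entirely in $[\,|H(p)|,+\infty)$ or in $(-\infty,-|H(p)|]$ (for $m\ge2$ the sphere of directions is connected; $m=1$ is trivial). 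Since each principal curvature $\lambda_i$ lies in that range and $\tfrac1m(\lambda_1+\dots+\lambda_m)=H(p)$, a short case check finishes: if all $\lambda_i\ge|H(p)|\ge H(p)$, an average equal to $H(p)$ forces every $\lambda_i=H(p)$, and the case $\lambda_i\le-|H(p)|$ is symmetric. Thus $p$ is umbilical, and since $p$ is arbitrary, $\Sigma$ is totally umbilical.

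The genuinely delicate point is this converse, and within it the step that upgrades the direction-by-direction scalar bound $|\langle S_pX,X\rangle|\ge|H(p)|$ to umbilicity, which requires combining connectedness of the space of directions with the trace (mean-curvature) constraint; the Gauss decomposition, the existence of intrinsic geodesics in each direction, and the equality analysis are all routine. I note in passing that the inequality clause alone already yields total umbilicity, so the equality statement comes for free, and that since the bound involves $|H|$ the conclusion is insensitive to the choice of orientation defining $\xi$, mirroring Theorem~C.
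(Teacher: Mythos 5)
Your Gauss-formula decomposition $\kappa^2=\kappa_g^2+\kappa_n^2$ and your proof of the ``only if'' direction coincide with the paper's. The converse, however, contains a genuine gap: you claim that the inequality $\kappa\ge|H|$ alone forces umbilicity, and your interval argument silently assumes $H(p)\neq0$. At a point with $H(p)=0$ the open interval $(-|H(p)|,|H(p)|)$ is empty, so the dichotomy ``the range $[\lambda_{\min},\lambda_{\max}]$ lies in $[\,|H|,+\infty)$ or in $(-\infty,-|H|\,]$'' breaks down and no conclusion can be drawn. This is not a removable technicality: any non-planar minimal surface in $\mathbb{R}^3$ (a catenoid, say) has $H\equiv0$, so every curve on it trivially satisfies $\kappa\ge|H|=0$, yet it is nowhere umbilical. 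Hence your closing remark that the inequality clause alone already yields total umbilicity, so that the equality statement ``comes for free,'' is false; the equality clause is an essential part of the hypothesis.

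The paper's converse avoids this by using the equality clause directly: for the $\Sigma$-geodesic $\alpha_i$ tangent to the $i$-th principal direction at $p$ one has $\kappa(0)=|\lambda_i|$, and the requirement that geodesics realize equality gives $\lambda_i^2=H(p)^2$ for every $i$; the trace identity $H=\frac{1}{m}(\lambda_1+\dots+\lambda_m)$ then forces all $\lambda_i$ to be equal, including when $H(p)=0$. Your argument is repairable along the same lines: at points where $H(p)\neq0$ your connectedness-of-directions argument is a legitimate alternative that really does need only the inequality, but at points where $H(p)=0$ you must invoke the equality clause, which forces every $\Sigma$-geodesic through $p$ to have $\kappa(0)=0$, hence $\langle S_pX,X\rangle=0$ for all unit $X$ and $S_p=0$ by symmetry. (Note also that the clause must be read as ``equality holds if and only if $\alpha$ is a geodesic''; the implication ``equality $\Rightarrow$ geodesic'' alone is vacuously satisfied on a catenoid, which contains no ambient straight lines.)
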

\begin{proof}
Any regular curve $\alpha(s)$ in $\Sigma^m$ leads to
\begin{eqnarray}
\kappa^2 = \kappa_g^2+\kappa_ n^2,\label{eq::KappaKappaGandKappaN}
\end{eqnarray}
where $\kappa_g$ and $\kappa_n$ are the geodesic and normal curvatures of $\alpha$ in $\Sigma$, respectively. 

If $\Sigma$ is totally umbilical, then $H(p)=\kappa_n(p)$, since $\kappa_n$ is a function of $p\in \Sigma$ only. Substituting this in the equation above gives $\kappa^2\geq H^2$ and, then, $\kappa\geq\vert H\vert$. In addition, $\kappa^2=H^2\Leftrightarrow\kappa_g\equiv0\Leftrightarrow\alpha:I\to \Sigma$ is a geodesic.

Conversely, let $\lambda_1,\dots,\lambda_m$ be the principal curvatures of $\Sigma$ at $p$ and $\alpha_i$ be the geodesic tangent to the $i$-th principal direction at $p$, i.e., $\kappa_n(p,\alpha'_i(0))=\lambda_i$, $\alpha_i(0)=p$. It follows from the hypothesis, and from Eq. (\ref{eq::KappaKappaGandKappaN}), that $\forall\,i\in\{1,\dots,m\},\,H(p)^2=\lambda_i^2$. Since $H$ is the arithmetic mean of the $\lambda_i$'s, the principal curvatures are all equal and, consequently, $\Sigma$ is totally umbilical.
\qed
\end{proof}

Now, applying 
Theorems { \ref{thr::SpcFrmViaTUspheresUsingRM}  and} \ref{thr::CharOfTUviaKappaAndH} to geodesic spheres, we have another characterization of space forms.
\begin{theorem}
Let $M^{m+1}$ ($m\geq2$) be a connected Riemannian manifold. Then, $M^{m+1}$ is a space form if, and only if, every curve on a geodesic sphere of sufficiently small radius satisfies Eq. (\ref{eq::CharOfTUviaKappaAndH}), with equality valid for geodesics {only}.
\end{theorem}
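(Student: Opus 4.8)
The plan is to deduce the statement as a formal corollary of Theorem~\ref{thr::SpcFrmViaTUspheresUsingRM} and Theorem~\ref{thr::CharOfTUviaKappaAndH}, the second applied with the hypersurface $\Sigma^m$ taken to be a geodesic sphere $G(p,R)$ of sufficiently small radius (oriented by its radial unit normal $\xi$). The only thing requiring attention is the reading of the quantifiers: the hypothesis ``every curve on a geodesic sphere of sufficiently small radius satisfies Eq.~(\ref{eq::CharOfTUviaKappaAndH})'' is to be understood as ``for each $p\in M$ there is $R_0(p)>0$ such that for all $R\in(0,R_0(p))$ and every regular $C^2$ curve $\alpha:I\to G(p,R)$ one has $\kappa(s)\geq|H(\alpha(s))|$, with equality precisely when $\alpha$ is a geodesic of $G(p,R)$,'' and likewise the conclusion of Theorem~\ref{thr::SpcFrmViaTUspheresUsingRM} is local at each point $p$.

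For the direct implication, assume $M^{m+1}$ is a space form. Since $m\geq 2$, Theorem~\ref{thr::SpcFrmViaTUspheresUsingRM} gives that every sufficiently small geodesic sphere $G(p,R)$ is a (smooth, embedded) totally umbilical hypersurface. Fixing such a $G(p,R)$ and applying the ``only if'' part of Theorem~\ref{thr::CharOfTUviaKappaAndH} with $\Sigma^m=G(p,R)$, we obtain $\kappa(s)\geq |H(\alpha(s))|$ for every regular $C^2$ curve $\alpha$ on $G(p,R)$, with equality exactly for the geodesics of $G(p,R)$; since $p$ and (small) $R$ were arbitrary, this is precisely the asserted property.

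Conversely, suppose the inequality (\ref{eq::CharOfTUviaKappaAndH}) holds, with the stated equality case, for every regular $C^2$ curve on each sufficiently small geodesic sphere. Fix $p$ and a small $R$, orient $G(p,R)$ by $\xi$, and apply the ``if'' part of Theorem~\ref{thr::CharOfTUviaKappaAndH} with $\Sigma^m=G(p,R)$: the equality clause is exactly what makes available the geodesics of $G(p,R)$ tangent to the principal directions, forcing $H^2=\lambda_i^2$ for all $i$ and hence the total umbilicity of $G(p,R)$. Thus every sufficiently small geodesic sphere of $M^{m+1}$ is totally umbilical, and Theorem~\ref{thr::SpcFrmViaTUspheresUsingRM} yields that $M^{m+1}$ is a space form. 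There is no hard analytic step here: the substance has already been absorbed into Theorems~\ref{thr::SpcFrmViaTUspheresUsingRM} and~\ref{thr::CharOfTUviaKappaAndH}. The only point needing a moment's care, and the closest thing to an obstacle, is checking that the radius threshold below which $G(p,R)$ is a smooth hypersurface (so that Theorem~\ref{thr::CharOfTUviaKappaAndH} applies) can be taken compatibly with the threshold implicit in Theorem~\ref{thr::SpcFrmViaTUspheresUsingRM}; this is automatic, since both are local conditions near $p$ and both are governed by the injectivity radius there.
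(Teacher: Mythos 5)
Your proposal is correct and is essentially the paper's own argument: the paper states this theorem as an immediate corollary obtained by applying Theorem~\ref{thr::SpcFrmViaTUspheresUsingRM} and Theorem~\ref{thr::CharOfTUviaKappaAndH} to geodesic spheres, which is exactly the two-step deduction you carry out (your added care about the quantifiers and radius thresholds is a harmless elaboration of what the paper leaves implicit).
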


\section{Total torsion of closed geodesic spherical curves}

We shall now restrict ourselves to 3-dimensional manifolds and define the \textit{total torsion}, $T$, of a $C^3$ regular curve $\alpha:I\to M^3$ as the integral $T=\int_a^b\tau(s)\mathrm{d}s$. It is known that the total torsion $T$ of any \textit{closed} spherical curve in Euclidean space vanishes (the same is trivially true for plane curves). {Conversely}, if the total torsion of every closed curve on a surface ${\Sigma^2}\subset\mathbb{R}^3$ vanishes, than ${\Sigma}$ is a plane or a sphere \cite{scherrer1940kennzeichnungKugel}, see also \cite{geppert1941carattSfera}. In other words, vanishing total torsion characterizes totally umbilical surfaces in $\mathbb{R}^3$, i.e., spheres and planes. Recently, the investigation of the total torsion of closed curves on a totally umbilical surface were extended to Riemannian space forms \cite{PansonatoGD2008totalTorsion}. (Notice that Pansonato and Costa call spherical curves any curve on a totally umbilical surface. Here, we shall only apply this terminology to curves that really lie on a geodesic sphere.)
\newline
\textbf{Theorem D (Pansonato and Costa \cite{PansonatoGD2008totalTorsion})}
\textit{Let $\Sigma^2$ be a connected surface on a three-dimensional space form $M^3$, then $\Sigma^2$ is totally umbilical if, and only if,  every closed curve $\alpha:I\to \Sigma^2$ has a vanishing total torsion.}  

\begin{remark}
Interestingly, the above characterization remains valid if we replace the total torsion, $\oint\tau$, by an integral $\oint f(\kappa)\tau$ with $f$ continuous \cite{PansonatoGD2008totalTorsion,yin2017curvature}.
\end{remark}

In this context, the Darboux frame plays an important role. We may equip $\alpha:I\to \Sigma^2$ with the \textit{Darboux frame} $\{\mathbf{t}=\alpha',\mathbf{h},\xi\}$, where $\{\mathbf{t}(s),\mathbf{h}(s)\}$ is a positive basis for $T_{\alpha(s)}\Sigma$ and $\xi$ the surface normal. The equation of motion is
\begin{equation}
\nabla_{\mathbf{t}}\left(
\begin{array}{c}
\mathbf{t}\\
\mathbf{h}\\
\xi\\
\end{array}
\right)=\left(
\begin{array}{ccc}
0 & \kappa_g & \kappa_n\\
-\kappa_g & 0 & \tau_g\\
 -\kappa_n & -\tau_g & 0\\ 
\end{array}
\right)\left(
\begin{array}{c}
\mathbf{t}\\
\mathbf{h}\\
\xi\\
\end{array}
\right),\label{eq::RiemDarbouxEqs}
\end{equation}
where $\kappa_g$ and $\kappa_n$ are  the \textit{geodesic} and \textit{normal curvatures} of $\alpha$ in $\Sigma$, respectively, while $\tau_g$ is the \textit{geodesic torsion}. Notice that $\xi$ minimizes rotation if, and only if, $\tau_g$ vanishes. In particular, lines of curvature are given by $\tau_g=0$ (Lemma \ref{lemma::RMnormalIFFlineCurv}) and, consequently, $\Sigma^2\subset M^3$ is a totally umbilical surface if, and only if, $\tau_g$ vanishes identically {for every curve in $\Sigma^2$}. Denoting by $\theta$ the angle between the principal normal $\mathbf{n}$ and the surface normal $\xi$, gives \cite{PansonatoGD2008totalTorsion}
\begin{equation}
\tau_g(s)=\tau(s)+\theta'(s).\label{eq::AngleBetweenFrenetDarboux}
\end{equation}

Proceeding with the guiding philosophy of this work, we now furnish a characterization of space forms in terms of the total torsion of closed curves. To do that, we first need to remove the restriction on the ambient manifold in the Theorem 3 of \cite{PansonatoGD2008totalTorsion}, where one assumes $M$ admits a conformal parameterization. This can be accomplished by imposing that the surface $\Sigma$ is simply connected.

\begin{lemma}\label{lem::CharTUsurfacesViaTotalTorsion}
Let $\Sigma^2\subset M^3$ be an orientable and simply connected surface on a three-dimensional connected Riemannian manifold $M^3$. If every $C^3$ closed curve $\alpha:I\to \Sigma^2$ has vanishing total torsion, then $\Sigma^2$ is totally umbilical. 
\end{lemma}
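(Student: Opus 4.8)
The plan is to prove the contrapositive: if $\Sigma$ is not totally umbilical, I will exhibit a closed $C^{\infty}$ curve on $\Sigma$ with nonzero total torsion. By the discussion preceding the lemma, $\Sigma$ is totally umbilical if and only if the geodesic torsion $\tau_g$ vanishes along every curve, i.e., if and only if every point of $\Sigma$ is umbilical. So assume there is a non-umbilical point $p$; shrinking, fix a neighborhood $U\ni p$ on which $\lambda_1>\lambda_2$, with smooth principal direction fields $e_1,e_2$. The Darboux equations (\ref{eq::RiemDarbouxEqs}) give $\tau_g=\langle S\mathbf{t},\mathbf{h}\rangle$, hence, for a curve whose unit tangent makes angle $\psi$ with $e_1$, the Euler--Bonnet identity $\tau_g=\tfrac12(\lambda_2-\lambda_1)\sin 2\psi$. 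Thus $\tau_g$ is nonzero and of fixed sign along any short enough arc in $U$ whose tangent stays close to a fixed direction that is not principal.

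Next I would build the test curve as a narrow hairpin loop around $p$. Choose a direction $\psi_0$ at $p$ that is neither a principal direction nor one along which the normal curvature vanishes (the first condition is exactly $\sin 2\psi_0\neq0$, and together they exclude at most four directions, so such $\psi_0$ exists; $\psi_0=\pi/4$ works unless $H(p)=0$, in which case a small perturbation of $\pi/4$ does). In Riemannian normal coordinates on $\Sigma$ centered at $p$, let $\gamma=\gamma_{L,\varepsilon}$ be a thin, elongated closed loop: two almost-straight nearly antiparallel arcs of length $\approx L$ with tangent direction $\approx\psi_0$ (resp. $\approx\psi_0+\pi$), joined by two short end caps of diameter $\varepsilon\ll L$, all junctions rounded so that $\kappa=\Vert\nabla_{\mathbf{t}}\mathbf{t}\Vert>0$ everywhere on $\gamma$ (on the long arcs $\kappa\geq|\kappa_n|>0$ since $\psi_0$ is non-asymptotic; on the short caps this is arranged by making $\kappa_g$ large). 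On each long arc $\psi$ stays within $O(L)$ of $\psi_0$ or of $\psi_0+\pi$, which give the same value of $\sin 2\psi$, and $\lambda_1-\lambda_2=\mu+O(L)$ with $\mu:=\lambda_1(p)-\lambda_2(p)>0$; since arc length increases throughout the traversal, the two long arcs contribute to $\oint_\gamma\tau_g\,\mathrm{d}s$ with the \emph{same} sign, while the two caps contribute only $O(\varepsilon)$. Hence $\oint_\gamma\tau_g\,\mathrm{d}s=-\mu\sin(2\psi_0)\,L+o(L)$, so for $\varepsilon\ll L$ and $L$ small this quantity is nonzero and of absolute value less than $2\pi$.

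To finish, I would use the relation $\tau_g=\tau+\theta'$ of (\ref{eq::AngleBetweenFrenetDarboux}), where $\theta$ is the continuously lifted angle from the Frenet normal $\mathbf{n}$ to the surface normal $\xi$ in the normal plane of $\gamma$ in $M$; this is well defined because $\kappa>0$ along $\gamma$. Since $\gamma$ is closed and $\mathbf{n},\xi$ are genuine periodic vector fields along it, the monodromy $\oint_\gamma\theta'\,\mathrm{d}s$ is an integer multiple of $2\pi$. Therefore $\oint_\gamma\tau\,\mathrm{d}s=\oint_\gamma\tau_g\,\mathrm{d}s-\oint_\gamma\theta'\,\mathrm{d}s$ differs from the nonzero number $\oint_\gamma\tau_g\,\mathrm{d}s\in(-2\pi,2\pi)$ by a multiple of $2\pi$, and so is itself nonzero — contradicting the assumption that every closed $C^3$ curve on $\Sigma$ has vanishing total torsion. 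Hence $\Sigma$ is totally umbilical. (Simple connectedness of $\Sigma$ serves here, as in \cite{PansonatoGD2008totalTorsion}, to place us in a global conformal setting, but the construction above is purely local around $p$ and uses only local normal or isothermal coordinates, which exist on any surface.)

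I expect the technical heart of the argument to be the middle step: arranging the hairpin so that $\oint_\gamma\tau_g\,\mathrm{d}s$ is simultaneously nonzero, of absolute value below $2\pi$, and realized along a curve with everywhere-positive curvature. The delicate point is that, although $\tau$ is large on the tight end caps, their combined contribution to $\oint_\gamma\tau_g\,\mathrm{d}s$ must be shown to be negligible; carrying this out rigorously is cleanest after expanding the principal frame, the angle $\psi$, and the curvatures $\kappa_g,\kappa_n$ in powers of the size of $\gamma$ in isothermal (or Riemannian normal) coordinates.
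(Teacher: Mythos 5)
Your proof is correct, but it follows a genuinely different route from the paper's. The paper argues forward from the hypothesis: for an arbitrary closed curve, $\oint\tau=0$ combined with $\tau_g=\tau+\theta'$ (Eq. \ref{eq::AngleBetweenFrenetDarboux}) and the periodicity of $\mathbf{n}$ and $\xi$ forces $\oint\tau_g\in2\pi\mathbb{Z}$; this quantization makes $\oint\tau_g$ a deformation invariant, simple connectedness is then used to deform the curve onto a small quadrilateral of curvature lines where $\tau_g\equiv0$, so $\oint\tau_g=0$ for every closed curve, and a final continuity argument converts this into pointwise umbilicity via Theorem \ref{thr::NtuIFFxiRM}. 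You instead prove the contrapositive by a quantitative local construction: near a non-umbilical point the Euler identity $\tau_g=\tfrac12(\lambda_2-\lambda_1)\sin2\psi$ has a fixed nonzero value along a fixed non-principal direction, and--the key observation--the \emph{same} value along the reversed direction $\psi_0+\pi$ because $\sin2\psi$ is $\pi$-periodic, so the two long arcs of the hairpin reinforce rather than cancel and $\oint\tau_g$ is nonzero yet of modulus below $2\pi$; since $\oint\theta'\in2\pi\mathbb{Z}$, the total torsion cannot vanish. Your route buys two things: it never uses simple connectedness (a thin hairpin is contractible), so it proves the lemma without that hypothesis; and it supplies exactly the ingredient needed to make rigorous the paper's last step, where one must rule out sign cancellation of $\tau_g$ along a closed curve whose tangent necessarily sweeps through all directions--a round loop about $p$ would \emph{not} have $\tau_g$ of one sign, whereas your hairpin does up to the $O(\varepsilon)$ caps (and note that on the caps it is $\tau$, not $\tau_g$, that blows up; $\tau_g=\langle S\mathbf{t},\mathbf{h}\rangle$ is bounded pointwise, so that estimate is easier than you suggest). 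The paper's route buys economy: it avoids the $o(L)$ bookkeeping, the positivity of $\kappa$ at the junctions, and the $C^3$ closing of the loop, all of which your construction must arrange so that $\mathbf{n}$, $\theta$, and $\tau$ are defined along the test curve. Both proofs rest on the same two pillars: the relation $\tau_g=\tau+\theta'$ with integral monodromy of $\theta$, and the equivalence between total umbilicity and the identical vanishing of $\tau_g$.
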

\begin{proof}
Let $\alpha:I\to \Sigma$ be a closed curve. Using the relation between the Frenet and Darboux frames in Eq. (\ref{eq::AngleBetweenFrenetDarboux}), we have
\begin{equation}
0=\oint_{\alpha}\tau=\oint_{\alpha}\tau_g-\oint_{\alpha}\theta'\Rightarrow \oint_{\alpha}\tau_g=2n\pi,\,n\in\mathbb{Z},\label{eq::TotalGeodTorsionForClosedCurvWithZeroTotalTorsion}
\end{equation}
where we used that $\int\theta'$ should be an integer multiple of $2\pi$ since the surface normal $\xi$ and the curve principal normal $\mathbf{n}$ return to their initial position (notice $\alpha^{(i)}(s_o)=\alpha^{(i)}(s_f)$, $i=1,2,3$). 

The Eq. (\ref{eq::TotalGeodTorsionForClosedCurvWithZeroTotalTorsion}) implies that $\oint_{\alpha}\tau_g$ does not vary under $C^3$ deformations of $\alpha$. Since $\Sigma^2$ is simply connected, we can deform $\alpha$ and make it as close as we want to a sufficiently small quadrilateral in $\Sigma^2$ whose sides are lines of curvature. Since $\tau_g$ vanishes for lines of curvature, we find $n=0$ and, then,
\begin{equation}
\forall\,\alpha: {I} \to \Sigma^2,\,\oint_{\alpha}\tau_g=\oint_{\alpha}\langle-\nabla_{\alpha'}\,\xi,(\alpha')^{\perp}\rangle=0,\label{eq::ZeroTotalGeodTorsionForClosedCurvWithZeroTotalTorsion}
\end{equation}
where $\{\alpha'(s),(\alpha')^{\perp}(s)\}$ is a positive orthonormal basis of $T_{\alpha(s)}\Sigma^2$. From Eq. (\ref{eq::ZeroTotalGeodTorsionForClosedCurvWithZeroTotalTorsion}) it follows that for every $\alpha'(s)\in T_{\alpha(s)}\Sigma$ we must have $\nabla_{\alpha'}\xi\perp (\alpha')^{\perp}$. Indeed, if it were $\langle(\nabla_{X}\,\xi)(p),X^{\perp}(p)\rangle\not=0$, $X\in\mathfrak{X}(\Sigma)$,  then by continuity every closed curve around $p$ would have $\tau_g>0$ (or $\tau_g<0$) and, then, $\oint\tau_g\not=0$. This contradicts Eq. (\ref{eq::ZeroTotalGeodTorsionForClosedCurvWithZeroTotalTorsion}). In other words,  $\nabla_{\alpha'}\xi$ should be a multiple of $\alpha'$, i.e., $\xi$ minimizes rotation. The lemma then follows from Theorem \ref{thr::NtuIFFxiRM}.
\qed
\end{proof}

\begin{theorem}\label{thr::CharSpcFrmTotalTorsion}
Let $M^3$ be a {three-dimensional} connected Riemannian manifold. Then, $M^3$ is a space form if, and only if, every closed curve $\alpha:I\to G(p,R)$ on a geodesic sphere, with a sufficiently small radius $R$, has a vanishing total torsion. 
\end{theorem}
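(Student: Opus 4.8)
The plan is to obtain Theorem~\ref{thr::CharSpcFrmTotalTorsion} as a corollary of the results already assembled, exactly in the spirit in which Theorem~\ref{thr::CharSpcFrmsViaSphrclCrv} followed from Theorems~\ref{thr::NtuIFFxiRM} and~\ref{thr::SpcFrmViaTUspheresUsingRM}. The two implications will run as follows: for the forward direction, vanishing total torsion of all closed curves on small geodesic spheres gives, via Lemma~\ref{lem::CharTUsurfacesViaTotalTorsion}, that each such sphere is totally umbilical, and hence, via Theorem~\ref{thr::SpcFrmViaTUspheresUsingRM}, that $M^3$ is a space form; for the converse, if $M^3$ is a space form then every small geodesic sphere is totally umbilical (Theorem~\ref{thr::SpcFrmViaTUspheresUsingRM}), and then Theorem~D forces every closed curve on it to have zero total torsion.

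For the ``if'' part I would fix $p\in M^3$ and a radius $R>0$ small enough that $\exp_p$ restricts to a diffeomorphism on the ball $B(0,R)\subset T_pM$ and small enough that the hypothesis applies. Then $G(p,R)=\exp_p(\mathbb{S}^2(R))$ is an embedded surface diffeomorphic to $\mathbb{S}^2$; in particular it is orientable — a global unit normal being the radial field $\xi(\beta_V(R))=\beta_V'(R)$ — and simply connected. By hypothesis every $C^3$ closed curve on $G(p,R)$ has vanishing total torsion, so Lemma~\ref{lem::CharTUsurfacesViaTotalTorsion} applies and yields that $G(p,R)$ is totally umbilical. Since $p$ and the admissible $R$ were arbitrary, every sufficiently small geodesic sphere of $M^3$ is totally umbilical, whence $M^3$ is a space form by Theorem~\ref{thr::SpcFrmViaTUspheresUsingRM}.

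For the converse, assume $M^3$ is a space form. Theorem~\ref{thr::SpcFrmViaTUspheresUsingRM} (equivalently, Theorem~A together with the local isometry of $M^3$ to $\mathbb{H}^3(r)$, $\mathbb{R}^3$, or $\mathbb{S}^3(r)$) shows that every sufficiently small geodesic sphere $G(p,R)\subset M^3$ is totally umbilical. Viewing $G(p,R)$ as a surface $\Sigma^2$ in the three-dimensional space form $M^3$, the direction of Theorem~D asserting that a totally umbilical surface carries only closed curves of zero total torsion gives the claim. One can also see this directly from the Darboux equations~\eqref{eq::RiemDarbouxEqs}: total umbilicity forces the geodesic torsion $\tau_g$ to vanish identically (Lemma~\ref{lemma::RMnormalIFFlineCurv}), so by~\eqref{eq::AngleBetweenFrenetDarboux} the total torsion $\oint_{\alpha}\tau$ reduces to $-\oint_{\alpha}\theta'$, which vanishes on a totally umbilical surface; invoking Theorem~D is simply the most economical route.

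The arguments are short, and the one point demanding genuine care is the verification of the hypotheses of Lemma~\ref{lem::CharTUsurfacesViaTotalTorsion}: one must record that a geodesic sphere of sufficiently small radius is orientable and simply connected — being the diffeomorphic image of $\mathbb{S}^2(R)$ under $\exp_p$ — and that the threshold radius can be taken uniformly on a neighborhood of each point, so that the phrase ``sufficiently small radius'' is meaningful throughout $M^3$. Beyond this topological bookkeeping, Theorem~\ref{thr::CharSpcFrmTotalTorsion} is a formal consequence of Lemma~\ref{lem::CharTUsurfacesViaTotalTorsion}, Theorem~\ref{thr::SpcFrmViaTUspheresUsingRM}, and Theorem~D.
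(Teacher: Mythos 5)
Your proposal is correct and follows essentially the same route as the paper: the ``if'' direction applies Lemma~\ref{lem::CharTUsurfacesViaTotalTorsion} to the (orientable, simply connected) geodesic spheres and then invokes Theorem~\ref{thr::SpcFrmViaTUspheresUsingRM}, while the ``only if'' direction reduces to the Pansonato--Costa result (Theorem~D) via total umbilicity of geodesic spheres in a space form. The only additions are the explicit topological bookkeeping for the hypotheses of the lemma and the optional Darboux-frame remark, both of which the paper leaves implicit.
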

\begin{proof}
Since geodesic spheres on a space form are totally umbilical, the ``only if'' part is already proved in Theorem 2 of \cite{PansonatoGD2008totalTorsion}. The converse follows from {lemma \ref{lem::CharTUsurfacesViaTotalTorsion}}, since geodesic spheres are simply connected.
\qed
\end{proof}

\section{Semi-Riemannian space forms}

One may naturally ask whether the characterizations of manifolds with constant sectional curvature given here would remain valid in semi-Riemannian spaces. We shall see below that this can be effectively done. As can be verified in the previous sections, see e.g. proof of Lemma \ref{lemma::RMnormalIFFlineCurv}, the diagonalizability of the shape operator was a crucial feature to employ RM vector fields as a tool. In  semi-Riemannian geometry too the shape operator is diagonalizable at an umbilical point and  every non-lightlike direction is a principal direction (a vector $V\not=0$ on a semi-Riemannian manifold $(M_{\nu}^{m+1},\langle\cdot,\cdot\rangle_{\nu})$ with  index $\nu$ may have a \textit{causal character}, i.e., it is \textit{timelike} if $\langle V,V\rangle_{\nu}<0$, \textit{lightlike} if $\langle V,V\rangle_{\nu}=0$, and \textit{spacelike} if $\langle V,V\rangle_{\nu}>0$ \cite{ONeil}).  

It often happens in semi-Riemannian geometry that one should exclude lightlike vectors/hyperplanes when defining geometric quantities. This is the case for the sectional curvature, which is not defined for a lightlike plane \cite{ONeil}, p. 229. In the following we only work with non-lightlike curves and hypersurfaces having the same causal characterer in all their points ($\alpha(t)$ is \textit{lightlike} when $\alpha'(t)$ is lightlike for all $t$, i.e., $\alpha'$ is normal to itself; and $\Sigma^m$ is \textit{lightlike} if the induced metric is degenerate).

A normal vector field $N$ \textit{minimizes rotation} along a non-lightlike curve $\alpha$ when $\nabla_{\alpha'}N$ and $\alpha'$ are parallel. Then, any RM frame $\{\alpha',\mathbf{n}_1,\dots,\mathbf{n}_m\}$ along a non-lightlike curve $\alpha$ satisfies equations of motion similar to those found in Lorentz-Minkowski space $\mathbb{E}_1^3$ \cite{daSilvaJG2017}:
\begin{equation}
\nabla_{\alpha'}\alpha'=\sum_{i=1}^m\varepsilon_i\kappa_i\mathbf{n}_i\mbox{ and }\nabla_{\alpha'}\mathbf{n}_i=\varepsilon\kappa_i\alpha',
\end{equation}
where $\varepsilon_i=\langle\mathbf{n}_i,\mathbf{n}_i\rangle_{\nu}=\pm1$ and $\varepsilon=\langle\alpha',\alpha'\rangle_{\nu}=\pm1$, i.e., $+1$ for spacelike vectors and $-1$ for timelike ones. {Now, we may extend previous Riemannian results to our new context. The semi-Riemannian version of Theorem \ref{thr::NtuIFFxiRM} is given by} 

\begin{theorem}\label{thrSemiRie::NtuIFFxiRM}
A non-lightlike and orientable hypersurface $\Sigma^m\subset M_{\nu}^{m+1}$ is totally umbilical if, and only if, {its} unit normal $\xi$ minimizes rotation along any non-lightlike regular curve $\alpha:I\to \Sigma^m$. In addition, along $\alpha$ it is valid the equation
\begin{equation}
\sum_{i=1}^m\varepsilon\, a_i\kappa_i(s)=\eta\, H(\alpha(s)),
\end{equation}
where $H$ is the mean curvature of $\Sigma$, $\eta=\langle\xi,\xi\rangle_{\nu}=\pm1$, and $\varepsilon=\langle\alpha',\alpha'\rangle_{\nu}=\pm1$.
\end{theorem}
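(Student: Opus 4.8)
The plan is to mimic the proof of the Riemannian version (Theorem~\ref{thr::NtuIFFxiRM}) while carefully tracking the causal-character signs $\varepsilon$ and $\eta$, which is the only genuinely new ingredient. First I would establish the semi-Riemannian analogue of Lemma~\ref{lemma::RMnormalIFFlineCurv}: for a non-lightlike regular curve $\alpha:I\to\Sigma^m$ in a non-lightlike orientable hypersurface $\Sigma^m\subset M_\nu^{m+1}$, the equivalence of (i) $\xi$ minimizes rotation along $\alpha$, (ii) $\alpha$ is a line of curvature, and (iii) writing $\xi=\sum_i a_i\mathbf{n}_i$ along $\alpha$ one has $\sum_i\varepsilon a_i\kappa_i=\eta\,\kappa_n$ with the $a_i$ constant. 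The equivalence (i)$\Leftrightarrow$(ii) is immediate from $S_p(\alpha')=-\nabla_{\alpha'}\xi$ and the fact (recalled in Sect.~6) that the shape operator is diagonalizable at umbilical points and every non-lightlike direction is a principal direction. For (ii)$\Rightarrow$(iii), I would differentiate $\langle\alpha',\xi\rangle_\nu=0$ and $\langle\xi,\mathbf{n}_i\rangle_\nu$, now using the semi-Riemannian RM equations $\nabla_{\alpha'}\alpha'=\sum_i\varepsilon_i\kappa_i\mathbf{n}_i$, $\nabla_{\alpha'}\mathbf{n}_i=\varepsilon\kappa_i\alpha'$, together with $a_i=\varepsilon_i\langle\xi,\mathbf{n}_i\rangle_\nu$ (because $\langle\mathbf{n}_i,\mathbf{n}_i\rangle_\nu=\varepsilon_i$); the normal curvature enters as $\nabla_{\alpha'}\xi=-\kappa_n\alpha'$ with $\langle\alpha',\alpha'\rangle_\nu=\varepsilon$, and bookkeeping yields $\eta\kappa_n=\sum_i\varepsilon a_i\kappa_i$ and $a_i'=0$. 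The converse (iii)$\Rightarrow$(i) is the same computation run backwards, showing $\nabla_{\alpha'}\xi$ is a multiple of $\alpha'$.

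With this lemma in hand, the theorem itself is short. If $\Sigma^m$ is totally umbilical then every non-lightlike tangent vector is an eigenvector of $S$, so $\kappa_n(q,X)$ depends only on $q$; since $H(q)=\frac1m\sum_{i}\lambda_i(q)$ and all principal curvatures coincide, $\kappa_n(q)=H(q)$, and substituting into part (iii) of the lemma gives $\sum_i\varepsilon a_i\kappa_i=\eta H$. Conversely, if $\xi$ minimizes rotation along every non-lightlike curve in $\Sigma$, then by the lemma every non-lightlike direction is principal; a short linear-algebra argument (a symmetric operator on a scalar-product space all of whose non-lightlike vectors are eigenvectors must be a multiple of the identity — one compares eigenvalues along two non-lightlike directions and, if needed, along a non-lightlike vector in the span of a putative lightlike eigenvector) shows $S_p$ is a scalar multiple of the identity at each $p$, i.e.\ $\Sigma$ is totally umbilical.

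The main obstacle I anticipate is precisely this last linear-algebra point together with the systematic sign-tracking. In the Riemannian case "every tangent vector is an eigenvector $\Rightarrow$ scalar operator" is trivial, but in the semi-Riemannian case one only gets this for \emph{non-lightlike} directions, so one must argue that the lightlike directions cannot spoil umbilicity — this is exactly the fact quoted in the first paragraph of Sect.~6 (the shape operator is diagonalizable and every non-lightlike direction is principal at an umbilical point), and it should be invoked rather than reproved. The second source of friction is purely computational: making sure the factors $\varepsilon$, $\varepsilon_i$, $\eta$ land in the right places when differentiating the orthogonality relations, since the metric is indefinite and $a_i=\varepsilon_i\langle\xi,\mathbf{n}_i\rangle_\nu$ rather than $\langle\xi,\mathbf{n}_i\rangle_\nu$. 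Once those signs are pinned down, the proof is a faithful transcription of the Riemannian argument, and the final line is simply "the result follows as in Theorem~\ref{thr::NtuIFFxiRM} from the semi-Riemannian version of Lemma~\ref{lemma::RMnormalIFFlineCurv}."
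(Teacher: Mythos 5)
Your proposal is correct and follows essentially the same route as the paper, which likewise reduces the statement to the Riemannian argument of Theorem~\ref{thr::NtuIFFxiRM} and Lemma~\ref{lemma::RMnormalIFFlineCurv} with careful tracking of the signs $\varepsilon$, $\varepsilon_i$, $\eta$. If anything, you are more explicit than the paper, whose proof only records the computation $\nabla_{\alpha'}\xi=\sum_i a_i\nabla_{\alpha'}\mathbf{n}_i$ and the identity $H=\eta(\sum_i\lambda_i)/m$, leaving the converse step (every non-lightlike direction principal $\Rightarrow$ umbilical) to the remark at the start of Section~6; your linear-algebra argument for that step is a reasonable way to fill in that detail.
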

\begin{proof}
The proof is similar to that of Theorem \ref{thr::NtuIFFxiRM}. We should only pay some attention on the causal characters of the vector fields {$\xi$ and $\alpha'$. Indeed}, if $\xi$ is RM along any non-lightlike curve $\alpha$, then we can write $\xi=\sum_ia_i\mathbf{n}_i$ for some constants $a_1,\dots,a_m$ and, in addition,
\begin{equation}
\nabla_{\alpha'}\xi = \sum_ia_i\nabla_{\alpha'}\mathbf{n}_i=-\sum_ia_i\varepsilon\kappa_i\alpha'\Rightarrow \kappa_n(\alpha')=\varepsilon\sum_ia_i\kappa_i.
\end{equation}
The desired result then follows, since the mean curvature is $H=\eta\left(\sum_i\lambda_i\right)/m$, where the $\lambda_i$'s are the eigenvalues of the shape operator $-\nabla_X\xi$. 
\qed
\end{proof}
{In addition, the semi-Riemannian version of Theorem  \ref{thr::CharOfTUviaKappaAndH} is given by}

\begin{theorem}\label{thr::CharOfLorentzianTUviaKappaAndH}
A non-lightlike and orientable hypersurface $\Sigma^m\subset M_{\nu}^{m+1}$ is totally umbilical if, and only if, for every non-lightlike regular $C^2$ curve $\alpha:I\to \Sigma^m$ one has
\begin{equation}
\left\{
\begin{array}{l}
\langle\alpha'',\alpha''\rangle_{\nu} \geq \eta H^2,\,\mbox{ if }\,\nabla^{\Sigma}_{\alpha'}\alpha'\mbox{ is not timelike}\\[5pt]
\langle\alpha'',\alpha''\rangle_{\nu} \leq \eta H^2,\,\mbox{ if }\,\nabla^{\Sigma}_{\alpha'}\alpha'\mbox{ is not spacelike}\\
\end{array}
\right.,
\label{eq::CharOfLorTUviaKappaAndH}
\end{equation}
where $H=H(s)$ is the mean curvature of $\Sigma$ {along $\alpha(s)$}, $\eta=\langle\xi,\xi\rangle_{\nu}=\pm1$, $\alpha''=\nabla_{\alpha'}\alpha'$, and $\nabla^{\Sigma}$ is the induced connection on $\Sigma$. In addition, equality only holds when $\alpha$ is a geodesic. 
\end{theorem}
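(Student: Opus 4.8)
The plan is to imitate the proof of Theorem~\ref{thr::CharOfTUviaKappaAndH}, with the Pythagorean relation $\kappa^2=\kappa_g^2+\kappa_n^2$ replaced by the identity coming from the Gauss formula in the semi-Riemannian setting. I would parametrize $\alpha$ so that $\langle\alpha',\alpha'\rangle_{\nu}=\varepsilon=\pm1$ (a non-lightlike $\alpha'$ keeps its causal character by continuity), decompose $\nabla_{\alpha'}\alpha'=\nabla^{\Sigma}_{\alpha'}\alpha'+\mathrm{II}(\alpha',\alpha')$ into the part tangent to $\Sigma$ and the part along $\xi$, note that the latter equals $\eta\,\kappa_n\,\xi$ with $\kappa_n=\langle\nabla_{\alpha'}\alpha',\xi\rangle_{\nu}=\langle S(\alpha'),\alpha'\rangle_{\nu}$, and use orthogonality of $\nabla^{\Sigma}_{\alpha'}\alpha'$ and $\xi$ to obtain
\begin{equation}
\langle\alpha'',\alpha''\rangle_{\nu}=\langle\nabla^{\Sigma}_{\alpha'}\alpha',\nabla^{\Sigma}_{\alpha'}\alpha'\rangle_{\nu}+\eta\,\kappa_n^2 .
\end{equation}
Both implications are then read off from this identity.

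For the ``only if'' direction, assuming $\Sigma$ totally umbilical gives $S=\lambda\,\mathrm{Id}$ and, since $H=\eta\lambda$, $\kappa_n^2=\lambda^2=H^2$, so the right-hand side becomes $\langle\nabla^{\Sigma}_{\alpha'}\alpha',\nabla^{\Sigma}_{\alpha'}\alpha'\rangle_{\nu}+\eta H^2$; the two cases of \eqref{eq::CharOfLorTUviaKappaAndH} follow immediately from the sign of $\langle\nabla^{\Sigma}_{\alpha'}\alpha',\nabla^{\Sigma}_{\alpha'}\alpha'\rangle_{\nu}$ ($\ge0$ if $\nabla^{\Sigma}_{\alpha'}\alpha'$ is not timelike, $\le0$ if not spacelike). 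Equality in either line forces $\langle\nabla^{\Sigma}_{\alpha'}\alpha',\nabla^{\Sigma}_{\alpha'}\alpha'\rangle_{\nu}=0$; since $\nabla^{\Sigma}_{\alpha'}\alpha'$ is also $g$-orthogonal to the non-lightlike $\alpha'$, in the signatures where that complement is definite this yields $\nabla^{\Sigma}_{\alpha'}\alpha'=0$, i.e. $\alpha$ is a geodesic of $\Sigma$. This is where the semi-Riemannian case is genuinely more delicate: the norm alone does not exclude a nonzero lightlike $\nabla^{\Sigma}_{\alpha'}\alpha'$, and that borderline must be handled (or ruled out by the causal type of $\alpha$).

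For the converse, I would fix $p\in\Sigma$ and, for each non-lightlike unit $v\in T_p\Sigma$, take the $\Sigma$-geodesic with $\alpha(0)=p$, $\alpha'(0)=v$; then $\nabla^{\Sigma}_{\alpha'}\alpha'(0)=0$ is at once ``not timelike'' and ``not spacelike'', so both inequalities apply and give $\langle\alpha''(0),\alpha''(0)\rangle_{\nu}=\eta H(p)^2$. As $\alpha''(0)=\eta\,\kappa_n(v)\,\xi$, this says $\kappa_n(v)^2=H(p)^2$, i.e. the quadratic form $q_p(v)=\langle S_p v,v\rangle_{\nu}$ satisfies $q_p(v)^2=H(p)^2\langle v,v\rangle_{\nu}^2$ for all non-lightlike $v$ (by homogeneity). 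On any connected open set of non-lightlike vectors the ratio $q_p(v)/\langle v,v\rangle_{\nu}$ is continuous with values in $\{\pm H(p)\}$, hence constant, say $\epsilon_0 H(p)$; being an identity of homogeneous quadratic polynomials on a nonempty open set, $q_p(v)=\epsilon_0 H(p)\langle v,v\rangle_{\nu}$ extends to all of $T_p\Sigma$, and polarization together with non-degeneracy of $g$ forces $S_p=\epsilon_0 H(p)\,\mathrm{Id}$ (with $\epsilon_0=\eta$ unless $H(p)=0$, from the trace relation). Thus $\Sigma$ is totally umbilical. The accompanying characterization of semi-Riemannian space forms is then obtained by specializing to geodesic spheres, exactly as in the Riemannian case. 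Throughout, the main obstacle is purely the causal-character bookkeeping — fixing the sign of $\langle\nabla^{\Sigma}_{\alpha'}\alpha',\nabla^{\Sigma}_{\alpha'}\alpha'\rangle_{\nu}$ and treating the degenerate case of lightlike $\nabla^{\Sigma}_{\alpha'}\alpha'$ — rather than any new geometric input.
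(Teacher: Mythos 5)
Your proposal is correct, and its skeleton matches the paper's: the published proof consists solely of the identity $\langle\alpha'',\alpha''\rangle_{\nu}=\langle\nabla^{\Sigma}_{\alpha'}\alpha',\nabla^{\Sigma}_{\alpha'}\alpha'\rangle_{\nu}+\kappa_n^2\langle\xi,\xi\rangle_{\nu}$ plus the remark that the rest is analogous to Theorem~\ref{thr::CharOfTUviaKappaAndH}; you derive the same identity and the same forward implication. Where you genuinely diverge is in the converse. The Riemannian argument runs geodesics tangent to the $m$ principal directions and concludes $\lambda_i^2=H^2$ for each eigenvalue; transplanting that verbatim to the indefinite setting tacitly assumes $S_p$ is diagonalizable with real eigenvalues, which is not available before umbilicity is established (the paper's remark that the shape operator is diagonalizable \emph{at an umbilical point} cannot be invoked at this stage). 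Your replacement --- run a $\Sigma$-geodesic in every non-lightlike direction, obtain $\langle S_pv,v\rangle_{\nu}^2=H(p)^2\langle v,v\rangle_{\nu}^2$ on a nonempty open cone, upgrade to an identity of quadratic forms, and polarize using self-adjointness and non-degeneracy to get $S_p=\epsilon_0H(p)\,\mathrm{Id}$ --- is exactly the argument one actually needs in signature, and is a real improvement over ``analogous.'' You also correctly flag the one soft spot in the equality clause: vanishing of $\langle\nabla^{\Sigma}_{\alpha'}\alpha',\nabla^{\Sigma}_{\alpha'}\alpha'\rangle_{\nu}$ only forces the tangential acceleration to be null, so a nonzero lightlike $\nabla^{\Sigma}_{\alpha'}\alpha'$ is a borderline case the paper silently ignores; it must be excluded by the causal type of $\alpha'$ (definiteness of its orthogonal complement in $T\Sigma$) or acknowledged as an exception. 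A minor point: for a $\Sigma$-geodesic you apply both inequalities on the grounds that $0$ is neither timelike nor spacelike --- this is consistent with the paper's definitions (which assign causal character only to nonzero vectors), but it is equally clean, and closer to the Riemannian proof, to invoke the clause that equality holds for geodesics.
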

\begin{proof}
The proof is analogous to that of Theorem \ref{thr::CharOfTUviaKappaAndH} once we {notice} that 
\begin{equation}
\alpha''=\nabla_{\alpha'}^{\Sigma}\alpha'+\kappa_n\xi\Rightarrow \langle\alpha'',\alpha''\rangle_{\nu}=\langle\nabla_{\alpha'}^{\Sigma}\alpha',\nabla_{\alpha'}^{\Sigma}\alpha'\rangle_{\nu}+\kappa_n^2\langle\xi,\xi\rangle_{\nu}.
\end{equation}
\qed
\end{proof}

\begin{remark}
Recently, Theorem \ref{thr::CharacEuclSpheresKoreanos} was extended to spacelike spheres in $\mathbb{E}_1^{m+1}$ \cite{KimBKMS2018}, i.e., the hyperbolic space in the hyperboloid model. Since in this context one has $\langle\alpha'',\alpha''\rangle=\kappa_g^2-\kappa_n^2$, a proof similar to that of Theorem \ref{thr::CharOfTUviaKappaAndH} can be provided to characterize totally umbilical spacelike hypersurfaces via the inequality $\langle\alpha'',\alpha''\rangle \geq - H^2$. This is a particular instance of Theorem \ref{thr::CharOfLorentzianTUviaKappaAndH} above.
\end{remark}

We are finally able to provide a characterization of semi-Riemannian manifolds with constant sectional curvature by applying to geodesic spheres the two above results concerning totally umbilical hypersurfaces. Similarly to what happens in Riemannian geometry,  semi-Riemannian space forms can be characterized by the umbilicity of their small geodesic spheres. Indeed, we have

\begin{theorem}
Let $M_{\nu}^{m+1}$ be a connected semi-Riemannian manifold ($m\geq 2$). Then, $M^{m+1}$ is a space form if, and only if, every sufficiently small geodesic sphere in $M_{\nu}^{m+1}$ is totally umbilical. 
\end{theorem}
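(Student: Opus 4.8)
The plan is to mirror, in the semi-Riemannian setting, the strategy already used for Theorem \ref{thr::SpcFrmViaTUspheresUsingRM}, but now drawing on the semi-Riemannian umbilicity characterizations (Theorems \ref{thrSemiRie::NtuIFFxiRM} and \ref{thr::CharOfLorentzianTUviaKappaAndH}) together with a Schur-type argument. For the ``only if'' direction, one observes that any semi-Riemannian space form is locally isometric to one of the model spaces $\mathbb{S}^{m+1}_{\nu}(r)$, $\mathbb{E}^{m+1}_{\nu}$, or $\mathbb{H}^{m+1}_{\nu}(r)$ (equivalently, pseudo-spheres and pseudo-hyperbolic spaces realized as quadrics in a flat space of one higher dimension), and in each of these the geodesic spheres are obtained as intersections with affine hyperplanes (or as level sets of the distance function), which are totally umbilical by a direct computation of the shape operator of the radial unit normal $\xi(q)=\beta_V'(R)$. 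Since total umbilicity is an isometry-invariant local condition, this transfers to every sufficiently small geodesic sphere of $M_\nu^{m+1}$.

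For the converse, which is the substantive direction, I would argue as in the proof of Theorem \ref{thr::SpcFrmViaTUspheresUsingRM}. Fix $q\in M_\nu^{m+1}$ and a non-lightlike $2$-plane $\Pi=\mathrm{span}\{X_q,Y_q\}\subset T_qM$ on which the metric is nondegenerate; choose coordinates so that $Y_q$ is a radial direction, i.e. pick $p$ near $q$ so that the geodesic sphere $G(p,R)$ through $q$ has unit normal in the direction $Y_q$ and is tangent to $X_q$. Build the variation $f(u,s)=\exp_p(uV(s))$ with $V$ a curve in the appropriate pseudo-sphere of unit vectors in $T_pM$, set $X=\partial f/\partial s$, $Y=\partial f/\partial u$, so that $\nabla_Y Y=0$, $[X,Y]=0$, and umbilicity of $G(p,R)$ gives $\nabla_X Y=\nabla_Y X=-\lambda X$ along $u=R$. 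Then the same computation yields
\begin{equation}
R(Y,X)Y=\nabla_Y(\lambda X)=\Bigl(\frac{\partial\lambda}{\partial u}-\lambda^2\Bigr)X,
\end{equation}
and hence $K_q(X,Y)=\bigl(\partial_u\lambda-\lambda^2\bigr)\big|_{u=R}$ depends only on $q$. Since the non-lightlike $2$-planes are dense among all $2$-planes, and the sectional curvature function is continuous, the sectional curvature at $q$ is independent of the chosen plane; thus $M_\nu$ is pointwise isotropic, and the semi-Riemannian Schur lemma (valid for $m\ge 2$) forces the sectional curvature to be a global constant.

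The main obstacle I anticipate is bookkeeping around causal characters. Unlike the Riemannian case, one cannot reach every $2$-plane directly by the geodesic-sphere construction: a geodesic sphere centered at $p$ with respect to a timelike (resp. spacelike) radial direction only probes certain planes, and lightlike planes are genuinely excluded since $K$ is not even defined there. The fix is a density/continuity argument: show that the non-lightlike $2$-planes at $q$ for which a suitable $G(p,R)$ exists form a dense subset of the Grassmannian of nondegenerate planes, that the radial unit normal $\xi$ is well-defined (non-lightlike) along the relevant directions, and that $K_q$ — defined and continuous on the open dense set of nondegenerate planes — is therefore constant on that set; isotropy on a dense set plus Schur then gives constant curvature. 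A secondary technical point is to make sure the geodesic sphere $G(p,R)$ is a non-lightlike hypersurface so that Theorem \ref{thrSemiRie::NtuIFFxiRM} applies; this again holds for $R$ sufficiently small away from the lightcone of $p$, by continuity of the induced metric. Once these causal-character caveats are handled, the algebraic core of the proof is identical to the Riemannian one.
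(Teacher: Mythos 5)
Your proposal follows essentially the same route as the paper: the umbilical-spheres-imply-space-form direction is handled by repeating the Riemannian Schur-type computation with $f(u,s)=\exp_p(uV(s))$, and the converse by reducing to the model hyperquadrics $\mathbb{E}_\nu^{m+1}$, $\mathbb{S}_\nu^{m+1}(r)$, $\mathbb{H}_\nu^{m+1}(r)$ and verifying that the radial normal $\xi$ is rotation minimizing there (the paper carries out this last computation explicitly with the spacelike/timelike geodesic formulas, which you only sketch). Your extra care about causal characters and the density of nondegenerate planes addresses a point the paper passes over silently, but it does not change the argument.
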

\begin{proof}
If every sufficiently small geodesic sphere in $M_{\nu}^{m+1}$ is totally umbilical, then a proof similar to that of Theorem \ref{thr::SpcFrmViaTUspheresUsingRM} shows that $M_{\nu}^{m+1}$ is a space form. 

Conversely, let $M_{\nu}^{m+1}$ be a semi-Riemannian manifold with constant sectional curvature $K$. Then, $M$ is locally isometric to a hyperquadric of $\mathbb{E}_{\nu}^{m+2}$: $\mathbb{S}_{\nu}^{m+1}(r)$ if $K=r^{-2}$; $\mathbb{E}_{\nu}^{m+1}$ if $K=0$; and $\mathbb{H}_{\nu}^{m+1}(r)$ if $K=-r^{-2}$ \cite{ONeil}. As in the Riemannian case, here we do the proof for this particular setting without any loss of generality. 

For $\mathbb{E}_{\nu}^{m+1}$, any hyperquadric $Q$ of radius $R$ with position vector $q$ has $\xi=q/R$ as the unit normal. Then, any curve $\alpha:I\to Q$ satisfies  $\nabla_{\alpha'}\xi=\alpha'/R$, which means that $\xi$ is RM along any curve and, therefore, $Q$ is totally umbilical (see \cite{daSilvaJG2017}, and references therein, for a further investigation  of this fact in $\mathbb{E}_1^3$). In the other cases, the normal to a geodesic sphere $G(p,R)$ can be built from the tangents to the geodesics emanating from the center of the sphere, i.e., along a curve $\alpha(s)=\exp_p(RV(s))$ the unit normal is $\xi=\frac{\partial}{\partial u}\exp_p(uV(s))\vert_{u=R}$ (see the analogous construction on $\mathbb{S}^{m+1}(r)$ and $\mathbb{H}^{m+1}(r)$ in Ref.  \cite{daSilvaMJM2018}).

A geodesic $\beta(u)$ in  $\mathbb{S}_{\nu}^{m+1}(r)=\{p\in\mathbb{E}_{\nu}^{m+2}:\langle p,p\rangle_{\nu}=r^2\}$ with initial condition $(p,V)\in T\mathbb{S}_{\nu}^{m+1}(r)$ may be written as
\begin{equation}\label{eq::SpcGeodInSnu}
\beta(u) =\exp_p(uV)= \cos(\frac{u}{r})p+r\sin(\frac{u}{r})V,\,\langle V,V\rangle_{\nu}=+1,
\end{equation} 
if $\beta$ is spacelike, or as
\begin{equation}\label{eq::TimeGeodInSnu}
\beta(u) =\exp_p(uV)= \cosh(\frac{u}{r})p+r\sinh(\frac{u}{r})V,\,\langle V,V\rangle_{\nu}=-1,
\end{equation}
if $\beta$ is timelike. It follows that a curve $\alpha:I\to G(p,R)$ may be written as $\alpha(s)=\exp_p(RV(cs))$, where $V$ is a unit speed curve in $T_p\mathbb{S}_{\nu}^{m+1}(r)$ with (i) $c=[r\sin(R/r)]^{-1}$ and $\langle V',V'\rangle_{\nu}=+1$, if $\alpha$ is spacelike and (ii) $c=[r\sinh(R/r)]^{-1}$ and $\langle V',V'\rangle_{\nu}=-1$, if $\alpha$ is timelike. Notice, Eq. (\ref{eq::SpcGeodInSnu}) [or Eq. (\ref{eq::TimeGeodInSnu})] {should} be used {when} the unit normal $\xi$ of $G(p,R)$ is spacelike (or timelike, respectively). 

Now we prove that $\xi(s)=\frac{\partial}{\partial u}\exp_p(uV(cs))\vert_{u=R}$ is RM along $\alpha(s)=\exp_p(RV(cs))$. If $\xi$ is spacelike, then $\alpha'(s)= c\,r\sin(\frac{R}{r})V'(cs)=V'(cs)$ and
\begin{eqnarray}
\nabla_{\alpha'}\xi & = & \nabla_{\alpha'}\left[-\frac{1}{r}\sin(\frac{R}{r})p+\cos(\frac{R}{r})V(cs)\right]\nonumber\\
&=& c\,\cos(\frac{R}{r})V'(cs)=\frac{1}{r}\cot(\frac{R}{r})\,\alpha'(s),
\end{eqnarray}
where we used that for orthogonal fields $X,Y\in \mathfrak{X}(\mathbb{S}_{\nu}^{m+1}(r))$ the (induced) covariant derivative $\nabla_XY$ in  $\mathbb{S}_{\nu}^{m+1}(r)$ coincides with the usual covariant derivative $D_XY$ in $\mathbb{E}_{\nu}^{m+1}$: $(D_XY)(p)=(Y\circ \gamma)'(0)$, $\gamma(0)=p$ and $\gamma'(0)=X$. On the other hand, if $\xi$ timelike, we may analogously find that 
\begin{equation}
\nabla_{\alpha'}\xi=\frac{1}{r}\coth(\frac{R}{r})\,\alpha'.
\end{equation}
In short, the unit normal of any geodesic sphere in $\mathbb{S}_{\nu}^{m+1}(r)$ minimizes rotation and, consequently, the geodesic spheres of $\mathbb{S}_{\nu}^{m+1}(r)$ are totally umbilical.

Finally, similar computations can be performed on geodesic spheres of the space form of negative curvature $\mathbb{H}_{\nu}^{m+1}(r)=\{p\in\mathbb{E}_{\nu+1}^{m+2}:\langle p,p\rangle_{\nu+1}=-r^2\}$.
\qed
\end{proof}

\begin{remark}
Notice, we did not consider an extension of Theorem \ref{thr::CharSpcFrmTotalTorsion}. We believe this is a subtle subject, since a semi-Riemannian hypersurface may fail to have a closed  curve with the same causal character  in all its point: e.g., in $\mathbb{E}_1^3$ there is no closed timelike curve \cite{LopezIEJG2014}. In this respect, it would be necessary to take {into} account curves with a changing causal character and, therefore, also understand what happens with the curve torsion near lightlike points.
\end{remark}

\begin{acknowledgements}
The Authors would like to {thank} \'Alvaro Ramos (Universidade Federal do Rio Grande do Sul) and Fernando Etayo (Universidad de Cantabria) for useful discussions. The first author, LCBdS, would also like to thank the staff from the Departamento de Matem\'atica, Universidade Federal de Pernambuco (Recife, Brazil) where part of this research was done while the author worked as a temporary lecturer at that institution. 
\end{acknowledgements}



\begin{thebibliography}{10}
\providecommand{\url}[1]{{#1}}
\providecommand{\urlprefix}{URL }
\expandafter\ifx\csname urlstyle\endcsname\relax
  \providecommand{\doi}[1]{DOI~\discretionary{}{}{}#1}\else
  \providecommand{\doi}{DOI~\discretionary{}{}{}\begingroup
  \urlstyle{rm}\Url}\fi

\bibitem{AdachiBAMS2000}
Adachi, T., Maeda, S.: Spaces forms from the viewpoint of their geodesic
  spheres.
\newblock Bull. Austral. Math. Soc. \textbf{62}, 205--210 (2000)

\bibitem{KimMonthly2003}
Baek, J., Kim, D.S., Kim, Y.H.: A characterization of the unit sphere.
\newblock Am. Math. Mon. \textbf{110}, 830--833 (2003)

\bibitem{BishopMonthly}
Bishop, R.L.: There is more than one way to frame a curve.
\newblock Am. Math. Mon. \textbf{82}, 246--251 (1975)

\bibitem{CastrillonLopezAM2015}
Castrill{\'o}n~L{\'o}pez, M., Fern{\'a}ndez~Mateos, V., Mu{\~n}oz~Masqu{\'e},
  J.: The equivalence problem of curves in a {R}iemannian manifold.
\newblock Annali di Matematica \textbf{194}, 343--367 (2015)

\bibitem{ChenCrelle1981}
Chen, B.Y., Vanhecke, L.: Differential geometry of geodesic spheres.
\newblock J. Reine Angew. Math. \textbf{325}, 28--67 (1981)

\bibitem{daSilvaJG2017}
{Da Silva}, L.C.B.: Moving frames and the characterization of curves that lie
  on a surface.
\newblock J. Geom. \textbf{108}, 1091--1113 (2017)

\bibitem{daSilvaMJM2018}
{Da Silva}, L.C.B., {Da Silva}, J.D.: Characterization of curves that lie on a
  geodesic sphere or on a totally geodesic hypersurface in a hyperbolic space
  or in a sphere.
\newblock Mediterr. J. Math. \textbf{15}, 70 (2018)

\bibitem{doCarmo1992}
{Do Carmo}, M.P.: Riemannian Geometry.
\newblock Birkh{\"a}user, Boston (1992)

\bibitem{Etayo2016}
Etayo, F.: Rotation minimizing vector fields and frames in {R}iemannian
  manifolds.
\newblock In: M.~{Castrill\'on~L\'opez}, L.~{Hern\'andez~Encinas},
  P.~{Mart\'inez~Gadea}, M.E. {Rosado~Mar\'ia} (eds.) Geometry, Algebra and
  Applications: From Mechanics to Cryptography, \emph{Springer Proceedings in
  Mathematics and Statistics}, vol. 161, pp. 91--100. Springer, Berlin (2016)

\bibitem{geppert1941carattSfera}
Geppert, H.: Sopra una caratterizzazione della sfera.
\newblock Annali di Matematica \textbf{20}, 59--66 (1941)

\bibitem{KimBKMS2018}
Kim, D.S., Kim, Y.H., Lee, J.W.: A characterization of hyperbolic spaces.
\newblock Bull. Korean Math. Soc. \textbf{55}, 1103--1107 (2018)

\bibitem{kulkarniPAMS1975}
Kulkarni, R.S.: A finite version of {S}chur's theorem.
\newblock P. Am. Math. Soc. \textbf{53}, 440--442 (1975)

\bibitem{LopezIEJG2014}
L\'opez, R.: Differential geometry of curves and surfaces in
  {L}orentz-{M}inkowski space.
\newblock Int. Electron. J. Geom. \textbf{7}, 44--107 (2014)

\bibitem{ONeil}
{O'Neil}, B.: Semi-{R}iemannian Geometry.
\newblock Academic Press, New York (1983)

\bibitem{PansonatoGD2008totalTorsion}
Pansonato, C.C., Costa, S.I.R.: Total torsion of curves in three-dimensional
  manifolds.
\newblock Geom. Dedicata \textbf{136}, 111--121 (2008)

\bibitem{scherrer1940kennzeichnungKugel}
Scherrer, W.: Eine {K}ennzeichnung der {K}ugel.
\newblock Vierteljschr. Naturforsch. Ges. Z{\"u}rich \textbf{85}, 40--46 (1940)

\bibitem{Spivak1979v4}
Spivak, M.: A Comprehensive Introduction to Differential Geometry, vol. 4, 2nd
  edn.
\newblock Publish or Perish, Houston (1979)

\bibitem{VanheckePRSocEdinburgh1979}
Vanhecke, L., Willmore, T.J.: Jacobi fields and geodesic spheres.
\newblock P. Roy. Soc. Edinb. A \textbf{82}, 233--240 (1979)

\bibitem{yin2017curvature}
Yin, S., Zheng, D.: The curvature and torsion of curves in a surface.
\newblock J. Geom. \textbf{108}, 1085--1090 (2017)

\end{thebibliography}

\end{document}